\newtheorem{theorem}{Theorem}[section]
\newtheorem{corollary}[theorem]{Corollary}
\newtheorem{proposition}[theorem]{Proposition}
\newtheorem{lemma}[theorem]{Lemma}
\newtheorem*{corollary*}{Corollary}
\theoremstyle{definition}
\newtheorem{definition}[theorem]{Definition}
\newtheorem{example}[theorem]{Example}
\theoremstyle{remark}
\newtheorem{remark}[theorem]{Remark}
\newcommand{\Nerve}{\mathrm{N}}
\newcommand{\bA}{\mathbf{A}}
\newcommand{\bB}{\mathbf{B}}
\newcommand{\bC}{\mathbf{C}}
\newcommand{\cR}{\mathcal{R}}
\newcommand{\Fin}{\mathbf{Fin}}
\newcommand{\Fun}{\mathbf{Fun}}
\newcommand{\Quiver}{\mathbf{Quiver}}
\newcommand{\Cat}{\mathbf{Cat}}
\newcommand{\Poset}{\mathbf{Poset}}
\newcommand{\Preord}{\mathbf{Preord}}
\newcommand{\Path}{\mathbf{Path}}
\newcommand{\Reach}{\mathbf{Reach}}
\newcommand{\sk}{\mathrm{sk \: }}
\newcommand{\Pre}{\mathbf{Pre}}
\newcommand{\C}{\mathbf C}
\newcommand{\D}{\mathbf D}
\newcommand{\ra}{\rightarrow}
\title{On reachability categories, persistence, and commuting algebras of quivers}
\author{Luigi Caputi\(\dagger\)}
\author{Henri Riihim\"{a}ki\(^\ast\)}
\begin{document}

\maketitle
\begin{abstract}
    For a finite quiver~$Q$, we study the \emph{reachability category}~$\Reach_Q$. We investigate the properties of~$\Reach_Q$ from both a categorical and a topological viewpoint. In particular, we compare~$\Reach_Q$ with~$\Path_Q$,  the category freely  generated by $Q$. As a first application, we study the category algebra of $\Reach_Q$, which is isomorphic to the \emph{commuting algebra} of $Q$. As a consequence, we recover, in a categorical framework, previous results obtained by Green and Schroll; we show that the commuting algebra of $Q$ is Morita equivalent to the incidence algebra of a poset, the reachability poset. We further show that commuting algebras are Morita equivalent if and only if the reachability posets are isomorphic.  As a second application, we define \emph{persistent Hochschild homology of quivers} via reachability categories.
\end{abstract}

\textbf{MSC:} 16B50, 16P10, 05E10, 18B35

\section{Introduction}
To any small category \(\C\) we can associate a preorder structure on the set of objects \(\mathrm{Ob}(\C)\) by declaring \(x \leq y\) if and only if there is some morphism \(x \ra y\) in \(\C\). This association yields a functor $\Pre$ from the large category \(\Cat\) of small categories  into the category \(\Preord\) of preorders. The funtor $\Pre$, sometimes called \emph{preorder reflection}, has a right adjoint given by inclusion (every preordered set can be seen as a category). This is a foundational instance of an order structure approximation of categories; for a recent perspective on further order structures appearing in different mathematical domains, see also~\cite{Floystad_order_structures}.

Quivers, and morphisms of quivers, constitute the category \(\Quiver\). Then, the preorder reflection is part of a composition of adjunctions 
\begin{center}
    	\begin{tikzpicture}
    		\tikzstyle{point}=[circle,thick,draw=black,fill=black,inner sep=0pt,minimum width=2pt,minimum height=2pt]
    		\tikzstyle{arc}=[shorten >= 3pt,shorten <= 3pt,->, thick]
    		\tikzstyle{loop}=[in=-30,out=30,looseness=10]
    		\node[] (0) at  (0,0) {\(\Quiver\)};
    		\node[] (1) at  (2.5,0) {\(\Cat\)};
            \node[] (2) at  (5,0) {\(\Preord\)};
    		\node[] (path) at  (1.25,0.7) {\(\Path\)};
    		\node[] (U) at  (1.25,-0.7) {\(U\)};
            \node[] (pre) at  (3.75,0.7) {\(\Pre\)};
    		\node[] (inc) at  (3.75,-0.7) {\(i\)};
    		
    		\draw[arc, bend left = 30] (0) to (1);
    		\draw[arc, bend left = 30] (1) to (0);
            \draw[arc, bend left = 30] (1) to (2);
    		\draw[arc, bend left = 30] (2) to (1);
    	\end{tikzpicture}
    \end{center}
where the functor \(\Path\) maps a quiver to its \emph{path category} and \(U\) denotes the forgetful functor. There is a functor directly mapping a quiver \(Q\) to a preorder. We denote this functor by \(\Reach\) and the resulting preordered set, seen as a category, is the main object of study in this work; we call it the \emph{reachability category} of a quiver \(Q\) -- see Definition~\ref{def:reach_G}. In graph theory the notion of reachability  refers to the existence of a path between two vertices in a directed graph, and this gives rise to a preorder relation on the set of vertices. The reachability category is exactly the same notion from a categorical point of view. 

Concurrently to our paper, the notion of reachability (graph, poset, category) has appeared in various recent works concerning homotopy and homology theories of directed graphs \cite{Ivanov_Cayley_digraphs,hepworth2023reachability,ivanov2023nested, puca2023obstructions}, and especially in relation with the so called \emph{magnitude-path spectral sequence}~\cite{asao}. In fact, the homology of the (nerve of the) reachability category appears as the limit homology in the magnitude-path spectral sequence. We focus on the categorical and topological  properties of the reachability categories. Our main motivation stems from applications in persistent homology, and, in particular, from investigations of suitable \emph{persistent Hochschild homology theories of quivers}~\cite{persistentHH}. A main tool in the construction of persistent Hochschild homology is the \emph{condensation} of graphs, a standard operation which produces a directed acyclic graph by condensing the strongly connected components into vertices. 

The interest in reachability categories stems from the  observation that they are equivalent to what we call in this work \emph{reachability posets}. Categorically, this equivalence is obtained by passing to skeletal subcategories. This is the categorical analog of condensation, and from this viewpoint  condensation of quivers satisfies a universal property -- see Proposition~\ref{prop:condensuni}. Equivalence of categories also preserves Hochschild homology (of category algebras). Then, we use these properties to extend the persistence pipeline of \cite{persistentHH} to the whole category of quivers by defining persistent Hochschild homology of quivers to be Hochschild homology of the associated reachability categories; up to taking their ranks (i.e.~the Hochschild Betti curves of a filtration of quivers), this agrees with the Hochschild homology of the associated posets, giving computational benefits -- see Section \ref{subsec:homological_stuff}. We interpret the persistent Hochschild homology via the classical result of Gerstenhaber and Schack \cite{GERSTENHABER1983143}, and introduce what we call \emph{reachability persistent homology}.

A second application of reachability categories that we present in this work concerns Morita properties of path algebras. For a preorder, the equivalence of categories described above allows to construct a poset which has a Morita equivalent category algebra. Considering the path algebra of a quiver and of its associated reachability category, via this Morita equivalent construction we obtain a categorical enhancement of a former result of Green and Schroll~\cite{green2023commuting}: if  $Q$ is a finite quiver, and~$\mathbb{K}$ is a field, then  the quotient of the path algebra~$\mathbb{K}Q$ by its parallel ideal is Morita equivalent to an incidence algebra. We refer to Theorem~\ref{thm:enhance} for a more detailed description of this result. If $\cR$ denotes the functor that to a quiver associates the reachability poset, restricted to the category $\Quiver_0$ of finite acyclic quivers, it follows that the diagram
 \begin{center}
 \begin{tikzcd}
\Quiver_0\arrow[dr,"\mathcal{C}"'] \arrow[r, "\cR"] & \Poset \arrow[d, "I"]\\
  & \mathbf{Alg}_{\mathbb{K}}
 \end{tikzcd}    
 \end{center}
is commutative up to Morita equivalences (here $I$ denotes the incidence algebra of a poset and $\mathcal{C}$ the commuting algebra). Then, using a classical result of Stanley~\cite[Theorem~1]{stanley}, we also show that given  commuting algebras $\mathcal{C}(Q)$ and $\mathcal{C}(Q')$ are Morita equivalent if and only if the reachability posets $\cR(Q)$ and $\cR(Q')$ are isomorphic (Theorem~\ref{thm:stley}). Considering the commutative diagram above, as an outlook, it would be interesting to see whether the category $\Quiver$ admits a Quillen model structure where the weak equivalences are given by the Morita equivalences.   

\subsection*{Acknowledgements}
The authors wish to thank Ehud Meir for his useful comments and feedbacks on the first draft of the paper. LC wishes  to warmly thank Francesco Vaccarino for valuable discussions on the topic. LC is grateful to INdAM-GNSAGA for its partial support on the research activity. HR acknowledges funding from the The Wallenberg Initiative on Networks and Quantum Information, and from the Dbrain project within Digital Futures consortium at KTH Royal Institute of Technology

\section{Quivers and categories}
In this section, we collect some basic notions about quivers, and recall the main definitions needed in the follow-up. Let \(\mathbf{2}\) denote the category with objects~\(E\) and $V$, and two non-identity morphisms \(s,t \colon E \ra V\), called \emph{source} and \emph{target}. Let \(\Fin\) be the full subcategory of $\mathbf{Set}$ of finite sets. Then, a (finite) \emph{quiver} is a functor \(Q \colon \mathbf{2} \ra \Fin\). Equivalently, a finite quiver can be represented as a directed graph with a set of vertices~$V$ and a set of directed edges~$E$. For each edge $e\in E$ the source and target maps describe the source $s(e)$ and the target $t(e)$ of~$e$; we will graphically represent an edge $e$ by an arrow $s(e)\longrightarrow t(e)$. When using this representation, we will also denote a quiver with the quadruple $(V,E,s,t)$. We sometimes denote edges~$e$ by ordered pairs~\((v,w)\) of vertices corresponding to the source~$v$ and the target~$w$ of $e$. Note that \emph{loops}, i.e.\ edges of type~\((v,v)\), and multiple edges between two vertices are allowed. Morphisms of quivers are natural transformations of functors. The category~$\Quiver$ of finite quivers and morphisms of quivers is the functor category $\Fun(\mathbf{2},\Fin)$.

\begin{remark}\label{rem:quiver_morphisms}
	Let \(Q=(V,E,s,t)\) and \(Q'=(V',E',s',t')\) be two quivers. A morphism \(f \colon Q \ra Q'\) boils down to requiring that the two  diagrams 
	\begin{center}
	\begin{tikzcd}
		E\arrow[d,"f_E"'] \arrow[r, "s"] & V \arrow[d,"f_V"]\\
		E' \arrow[r, "s'"] & V'
	\end{tikzcd}
 \quad
 and \quad
	\begin{tikzcd}
		E\arrow[d,"f_E"'] \arrow[r, "t"] & V \arrow[d,"f_V"]\\
		E' \arrow[r, "t'"] & V'
	\end{tikzcd}  
	\end{center}	
 commute. Note that morphisms of quivers can collapse edges to loops. Consider for example the following quivers:
    \begin{center}
    	\begin{tikzpicture}
    		\tikzstyle{point}=[circle,thick,draw=black,fill=black,inner sep=0pt,minimum width=2pt,minimum height=2pt]
    		\tikzstyle{arc}=[shorten >= 3pt,shorten <= 3pt,->, thick]
    		\tikzstyle{loop}=[in=-45,out=45,looseness=10]
    		\node[] (0) at  (0,0) {0};
    		\node[] (1) at  (2.5,0) {1};
    		\node[] (a) at  (1.25,0.7) {\(a\)};
    		\node[] (b) at  (1.25,-0.7) {\(b\)};
    		
    		\draw[arc,red, bend left = 30] (0) to (1);
    		\draw[arc,red, bend right = 30] (0) to (1);
    		
    		\node at (-0.65,0) {\(Q=\)};
    		
    		\node[] (0') at (5,0) {\(\bullet\)};
    		\node[] (e) at  (6.35,0) {\(\ast\)};   		
    		\path[->,thick] (0') edge[loop right, red] (0');
    		\node at (4.35,0) {\(Q'=\)};
    	\end{tikzpicture}
    \end{center}
	Then, the rules \(f_E(a) = f_E(b) = \ast\), and \(f_V(0) = f_V(1) = \bullet\) describe a morphism of   quivers. 
 \end{remark}

A \emph{path} from a vertex \(v\) to a vertex \(w\) in a quiver is a sequence~\((e_0,\dots,e_n)\) of edges  such that \(s(e_0)=v\), \(t(e_n)=w\), and \(t(e_i)=s(e_{i+1})\). A \emph{simple} path is a path in which the edges contain no vertex that is repeated twice.

\begin{figure}[h]
	\begin{tikzpicture}[baseline=(current bounding box.center)]
		\tikzstyle{point}=[circle,thick,draw=black,fill=black,inner sep=0pt,minimum width=2pt,minimum height=2pt]
		\tikzstyle{arc}=[shorten >= 8pt,shorten <= 8pt,->, thick]
		
		\node[above] (v0) at (0,0) {$v_0$};
		\draw[fill] (0,0)  circle (.05);
		\node[above] (v1) at (1.5,0) {$v_1$};
		\draw[fill] (1.5,0)  circle (.05);
		\node[] at (3,0) {\dots};
		\node[above] (v4) at (4.5,0) {$v_{n-1}$};
		\draw[fill] (4.5,0)  circle (.05);
		\node[above] (v5) at (6,0) {$v_{n}$};
		\draw[fill] (6,0)  circle (.05);
		
		\draw[thick, red, -latex] (0.15,0) -- (1.35,0);
		\draw[thick, red, -latex] (1.65,0) -- (2.5,0);
		\draw[thick, red, -latex] (3.4,0) -- (4.35,0);
		\draw[thick, red, -latex] (4.65,0) -- (5.85,0);
	\end{tikzpicture}
	\caption{The linear quiver~$I_n$.}
	\label{fig:nstep}
\end{figure}
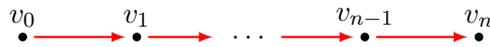

Note that the definition of a path allows  loops as well. The morphism \(f\) described in Remark~\ref{rem:quiver_morphisms} sends both paths \(a\) and \(b\) in \(Q\) to the same path in \(Q'\): the path containing the loop~\(\ast\). On the other hand, in the sequence of edges describing a simple path, loops are not allowed. Saying otherwise, a simple path is an isomorphic copy of the quiver $I_n$ illustrated in Figure~\ref{fig:nstep}. We call \emph{length} of a (simple) path the number of its edges; e.g.\ the length of $(e_0,\dots,e_n)$ is $n+1$. The following is straightforward from the definitions:

\begin{lemma}\label{lem:paths_to_paths}
    Morphisms in $\Quiver$ send paths to paths. 
\end{lemma}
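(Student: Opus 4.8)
The plan is to unwind the definitions of morphism of quivers and of path, and to simply check that the image of a path under a morphism again satisfies the defining conditions of a path.

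First I would fix a morphism $f = (f_V, f_E) \colon Q \ra Q'$ between quivers $Q = (V,E,s,t)$ and $Q' = (V',E',s',t')$, and a path $(e_0, \dots, e_n)$ in $Q$ from $v$ to $w$; thus $s(e_0) = v$, $t(e_n) = w$, and $t(e_i) = s(e_{i+1})$ for all $0 \le i < n$. I would then propose the sequence $(f_E(e_0), \dots, f_E(e_n))$ of edges of $Q'$ as the candidate image path. The commutativity of the two squares in Remark~\ref{rem:quiver_morphisms} gives the identities $s' \circ f_E = f_V \circ s$ and $t' \circ f_E = f_V \circ t$ as maps $E \ra V'$. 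Applying the first to $e_0$ yields $s'(f_E(e_0)) = f_V(s(e_0)) = f_V(v)$; applying the second to $e_n$ yields $t'(f_E(e_n)) = f_V(w)$. For the intermediate compatibility, for each $0 \le i < n$ I compute $t'(f_E(e_i)) = f_V(t(e_i)) = f_V(s(e_{i+1})) = s'(f_E(e_{i+1}))$, where the middle equality is exactly the hypothesis $t(e_i) = s(e_{i+1})$. Hence $(f_E(e_0), \dots, f_E(e_n))$ is a path from $f_V(v)$ to $f_V(w)$ in $Q'$, which is the claim.

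There is essentially no obstacle here; the only point worth a remark is the scope of the statement. As the discussion preceding Lemma~\ref{lem:paths_to_paths} emphasizes (via the example of $f$ collapsing $a$ and $b$ onto the loop $\ast$), a morphism need not send \emph{simple} paths to simple paths, since distinct vertices $v_0, \dots, v_n$ of a simple path may be identified by $f_V$, producing repeated vertices or even loops. So I would state the lemma and its proof only for paths in the general sense, and perhaps add one sentence pointing out that simplicity is not preserved, consistent with the narrative already set up in the text. The argument does not require finiteness of the quivers and works verbatim for arbitrary quivers, though within this paper all quivers are finite.

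If desired, one could phrase the proof slightly more structurally: a path of length $n+1$ in $Q$ is the same datum as a morphism of quivers $I_n \ra Q$ (as hinted at in the text, where a simple path is an isomorphic copy of $I_n$, but here we allow the map to be non-injective on vertices), so post-composition with $f$ sends it to a morphism $I_n \ra Q'$, i.e.\ a path in $Q'$. This reformulation makes the statement a one-line consequence of composition of morphisms in $\Quiver$, but it requires first recording the correspondence between paths and maps out of $I_n$, so for a self-contained proof the direct verification above is the most economical route.
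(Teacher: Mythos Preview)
Your proof is correct and matches the paper's approach: the paper simply declares the lemma ``straightforward from the definitions'' and gives no further argument, and your direct verification via the commuting squares of Remark~\ref{rem:quiver_morphisms} is exactly the intended unpacking. Your added remarks on simplicity not being preserved and the alternative $I_n$-phrasing are accurate and consistent with the surrounding text, though the paper itself omits them.
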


We can associate to any quiver \(Q\) a small category~$\Path_{Q}$, called the  \emph{path category} of~$Q$. This is the category freely generated by $Q$. Spelling it out, the path category has the vertices of~\(Q\) as objects. The set of morphisms between the  vertices \(v\) and \(w\) consists of all possible paths in~$Q$ from \(v\) to \(w\). For each vertex \(v\) the trivial path with an empty sequence of edges is taken to be the identity morphism \(1_v\) at \(v\).  There is a forgetful functor~$U$ from the category~$\Cat$ of small categories and functors to the category of (possibly infinite) quivers, obtained by forgetting which arrows are the identities and which the compositions. Such  forgetful functor has a left adjoint, which is the free functor sending a quiver \(Q\) to~$\Path_{Q}$ (see e.g.\ \cite[Section~II.7]{maclane:71}). 

In some cases, it might be difficult to directly work with the category $\Path_{Q}$. For example, if the quiver~\(Q\) contains directed cycles, i.e.\ paths from a vertex \(v\) to itself, then the free functor will enforce the category~$\Path_{Q}$ to have infinitely many morphisms. Furthermore, category algebras of categories with directed cycles are inifinite dimensional. As our aim is to work with general quivers, not necessarily acyclic, in Section~\ref{sec:reach} we will introduce  another family of categories naturally associated to quivers, which is the main object of study in this paper.

We now proceed with recalling the notion of category algebras.

\begin{definition}
    Let $\bC$ be a category and $R$ be a commutative ring with unity. The \emph{category algebra} $R\bC$ is the free $R$-module with basis the set of morphisms of $\bC$. The product on the basis elements is given by
    \[f \cdot g = 
    \begin{cases}
        f \circ g & \text{when the composition exists in $\bC$}\\
        0 & \text{otherwise}    
    \end{cases}\]
    and then it is linearly extended to the whole $R\bC$. 
\end{definition}

The category algebra $R\bC$ is an associative \(R\)-algebra. If $\bC$ has finitely many objects, then~$R\bC$ is also unital. The unit is given by $\sum_{c\in \bC} 1_c$, where $1_c$ is the identity endomorphism of the object~$c$ in $\bC$. The definition of the category algebra is a generalisation of the classical definition of group algebra. In fact, if $G$ is a group, seen as a category with a single object and $G$ as  morphisms, then the associated category algebra is  the classical group algebra. We also have the following classical examples. 

\begin{example}\label{ex:path algebras}
    When $\bC$ is the path category $\Path_Q$, then the category algebra $R\Path_Q$ is the classical \emph{path algebra} of \(Q\).
\end{example} 

Recall that every poset~$(P,\leq)$ can be seen as a category~$\mathbf{P}$ in a standard way: there is a unique morphism $p\to q$ if and only if $p\leq q$. For a poset $P$, Rota introduced the definition of an \emph{incidence algebra}, cf.\ \cite[Section~3]{rota}; this is the algebra generated by the relations $p\leq q$, with convolution product. Equivalently, the incidence algebra of a poset is the quotient of its path algebra with respect to the \emph{parallel ideal}, i.e.~the two-sided ideal generated by all differences of paths with the same source and the same end vertices~\cite{MR982636}. Incidence algebras provide other examples of category algebras:
 
\begin{example}
Let $P$ be a finite poset and $\mathbf{P}$ its associated category. Then, the category algebra~\(R \mathbf{P}\) is isomorphic to the incidence algebra of \(P\). 
\end{example}

A category~$\bC$, via the forgetful functor $U$,  can be regarded also as a quiver; hence, we can form the path category $\Path_\bC$.

\begin{remark}\label{rem:fajah}
By \cite[Proposition 2.2.6]{Xu_thesis}, the obvious functor \(\phi \colon \Path_\bC \ra \bC\) induces a surjective homomorphism \(\phi \colon R\Path_\bC \ra R\bC\); its kernel is  generated by \[\{\xrightarrow{\alpha_1}\xrightarrow{\alpha_2}-\xrightarrow{\alpha_2\alpha_1}\}\] where \(\alpha_1\) and \(\alpha_2\) are morphisms of $\bC$. Then this map induces a natural isomorphism of \(R\)-algebras between \(R\Path_\bC / \ker(\phi)\) and \(R\bC\).
\end{remark}

When $\bC$ is a poset~$P$ (seen as a category), then the map $\phi$ in Remark~\ref{rem:fajah} has the path algebra of $P$ as domain, and the incidence algebra of $P$ as target. Hence, the induced isomorphism yields the equivalent definition of incidence algebras as quotient of path algebras  (by the parallel ideal). If the base ring is a field, the parallel ideal is zero if and only if $P$ is a tree (as a poset, i.e.\ if for each $p \in P$, the set $\{ s \in P \mid  s < p\}$  is well-ordered); see also~\cite{ORTEGA2006225}. This happens if and only if the incidence algebra of $P$ is hereditary.  We can summarise it as follows:

\begin{remark}\label{rem:pathalgincalg}
    Let $P$ be a finite poset. Then its associated path algebra and incidence algebra are isomorphic if and only if $P$ is a tree.
\end{remark}

We conclude the section with recalling the definition of quiver condensation. A quiver is \emph{strongly connected} if it contains a path from $x$ to $y$ and a path from $y$ to~$x$, for every pair of vertices $x$ and $y$. A subquiver \(Q' \subset Q\) is a \emph{strongly connected component} of \(Q\) if it is strongly connected and maximal with respect to this property. The \emph{condensation} \(c(Q)\) of a quiver~\(Q\) is the quiver with the strongly connected components of~\(Q\) as vertices; for two distinguished vertices~\(X\) and \(Y\) there is a directed edge \((X,Y)\) in \(c(Q)\) if and only if there is an edge \((x,y)\) in~\(Q\) for some \(x \in X\) and \(y \in Y\).  Observe that, by definition, the condensation of a quiver does not have multiple edges between two given vertices, and that the condensation of a directed cycle is the quiver with one vertex and a loop. When taken in the category of directed graphs where the morphisms are required to be edge preserving, condensation does not always yield a functor (see e.g.\ the text after~\cite[Remark~1.11]{persistentHH}). However,   condensation yields an endofunctor on  quivers. 

\section{The reachability category of a quiver}\label{sec:reach}
In this section, we introduce the  reachability category of a finite quiver. We provide a comparison with the path category and study related algebraic and topological properties. 

\subsection{The category \(\Reach_{Q}\)}
Let $Q$ be a finite quiver.

\begin{definition}\label{def:reach_G}
The  \emph{reachability category} \(\Reach_{Q}\)  is the category with objects the vertices of~\(Q\), and for $v,w\in Q$, the Hom-set \(\Reach_{Q}(v,w)\) is defined as follows:
\[
\Reach_{Q}(v,w)\coloneqq 
\begin{cases}
* & \text{if there is a path from $v$ to $w$ in $Q$}  \\
\emptyset & \text{ otherwise}\
\end{cases}
\]
The Hom-set \(\Reach_{Q}(v,v)\) is defined as the identity at \(v\).
\end{definition}

We use  the term \emph{reachability} in analogy with graph theory. In graph theory, in fact, the notion of reachability of a vertex \(w\) from a vertex \(v\) refers to the existence of a path from \(v\) to~\(w\). Definition~\ref{def:reach_G} is the direct categorical extension of this notion.

Recall that a category $\C$ is called \emph{thin} if for any pair of objects $c,c'\in \C$ there is at most one morphism $c\to c'$ between them. By definition, $\Reach_{Q}(v,w)$ has a single morphism if and only if there is a path in $Q$ from $v$ to $w$.

\begin{lemma}\label{lem:thin}
The reachability category of a quiver $Q$ is a thin category. 
\end{lemma}

Recall that an \(EI\)-category is a category in which every endomorphism is an isomorphism. The identity in \(\Reach_Q\) is the only endomorphism at each vertex, and is invertible, hence we get:

\begin{corollary}\label{cor:EI}
	The reachability category  is an \(EI\)-category.
\end{corollary}

The reachability category is strictly related to the path category. It is therefore expected that, in some cases, these categories are isomorphic. Recall that a  quiver is a \emph{tree} if its underlying undirected graph is a tree. Then, the following is straightforward from the definitions:

\begin{remark}\label{prop:polytree}
    If $Q$ is a tree, then the categories \(\Path_{Q}\) and \(\Reach_{Q}\) are isomorphic. 
\end{remark}

Remark~\ref{prop:polytree} does not give a complete characterisation of quivers $Q$ for which \(\Path_{Q}\) and \(\Reach_{Q}\) are isomorphic categories. For example, consider the quiver illustrated in Figure~\ref{fig:altquiv}.
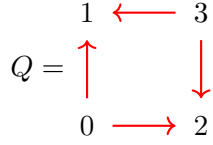
\begin{figure}
\begin{tikzpicture}
\tikzstyle{point}=[circle,thick,draw=black,fill=black,inner sep=0pt,minimum width=2pt,minimum height=2pt]
\tikzstyle{arc}=[shorten >= 3pt,shorten <= 3pt,->, thick, red]
\node[] (0) at  (0,0) {0};
\node[] (1) at  (0,1.5) {1};
\node[] (2) at  (1.5,0) {2};
\node[] (3) at  (1.5,1.5) {$3$};
	
\draw[arc] (0) to (1);
\draw[arc] (0) to (2);
\draw[arc] (3) to (1);
\draw[arc] (3) to (2);
	
\node at (-0.65,0.75) {\(Q=\)};
\end{tikzpicture}
\caption{An alternating quiver on four vertices.}
\label{fig:altquiv}
\end{figure}
Then, it is easy to see that \(\Path_{Q}\) and \(\Reach_{Q}\) are also isomorphic, even though $Q$ is not a tree. In order to completely characterise the family of finite quivers for which the path category and the reachability category are isomorphic, we  introduce the  notion of \emph{quasi-bigons}. Let~$B_{m,n}$ be the quiver illustrated in Figure~\ref{fig:nbigon}. We use the convention that, when $m,n=0$,  $B_{0,0}$ denotes the quiver on vertices~$x$ and $y$ with two edges from $x$ to $y$ and no other intermediate vertex.

\begin{figure}
\begin{tikzpicture}[baseline=(current bounding box.center)]
		\tikzstyle{point}=[circle,thick,draw=black,fill=black,inner sep=0pt,minimum width=2pt,minimum height=2pt]
		\tikzstyle{arc}=[shorten >= 8pt,shorten <= 8pt,->, thick]
		
		\node[left] (v0) at (0,0) {$x$};
  		\node (x) at (0,0) {};
		\draw[fill] (0,0)  circle (.05);
  
		\node[above] (v1) at (1.5,1) {$v_1$};
  		\node (vv1) at (1.5,1) {};
  
		\draw[fill] (1.5,1)  circle (.05);
		\node[below] (w1) at (1.5,-1) {$w_1$};
  		\node (ww1) at (1.5,-1) {};

		\draw[fill] (1.5,-1)  circle (.05);

		\node (v2) at (3,1) {\dots};
  		\node (w2) at (3,-1) {\dots};

		\node[above] (v4) at (4.5,1) {$v_m$};
  		\node (vv4) at (4.5,1) {};
		\draw[fill] (4.5,1)  circle (.05);
  		\node[below] (w4) at (4.5,-1) {$w_n$};
      		\node (ww4) at (4.5,-1) {};
		\draw[fill] (4.5,-1)  circle (.05);
  
		\node[right] (v5) at (6,0) {$y$};
  	\node (y) at (6,0) {};
		\draw[fill] (6,0)  circle (.05);
		
		\draw[thick, red, -latex] (x) to[bend left] (vv1);
  		\draw[thick, red, -latex] (x) to[bend right] (ww1);

\draw[thick, red, -latex] (vv1) to[] (v2);
  		\draw[thick, red, -latex] (ww1) to[] (w2);

\draw[thick, red, -latex] (v2) to[] (vv4);
  		\draw[thick, red, -latex] (w2) to[] (ww4);
\draw[thick, red, -latex] (ww4) to[bend right] (y);
  		\draw[thick, red, -latex] (vv4) to[bend left] (y);
\end{tikzpicture}
	\caption{The quiver $B_{m,n}$.}
	\label{fig:nbigon}
\end{figure}
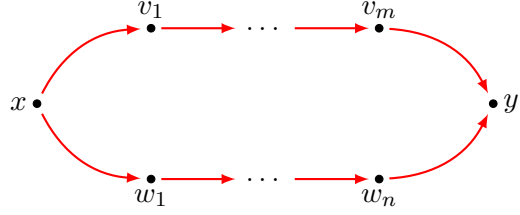

\begin{definition}
     We say that $B$ is a quasi-bigon of a quiver $Q$ if it is a subquiver of $Q$ isomorphic to $B_{m,n}$ for some $m,n\geq 0$. 
\end{definition}

If $B_{m,n}\cong Q$ {for some $m,n$}, we say that $Q$ itself is a quasi-bigon. In the same notations, if there is a path in \(Q\) from \(y\) to \(x\), we call it the \emph{diagonal} of \(B\). The following structural result will be used in Section \ref{sec:commuting_algebras}.
\begin{lemma}\label{lem:strong_qbigon}
    A quasi-bigon $B$ in a quiver \(Q\) is strongly connected if and only if it has a diagonal in \(Q\).
\end{lemma}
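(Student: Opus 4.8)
The plan is to prove both implications directly from the definition of strong connectivity and the very explicit shape of $B_{m,n}$. Recall that $B$ is a subquiver of $Q$ isomorphic to $B_{m,n}$, so it has two vertices $x,y$, an ``upper'' path $x \to v_1 \to \cdots \to v_m \to y$ and a ``lower'' path $x \to w_1 \to \cdots \to w_n \to y$, and no edges other than these. In particular, inside $B$ itself there is always a path from $x$ to $y$ (either branch), but no path from $y$ back to $x$, and no path between a $v_i$ and a $w_j$.

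For the ``if'' direction, suppose $B$ has a diagonal in $Q$, i.e.\ a path $p$ in $Q$ from $y$ to $x$. I claim the vertex set of $B$ is strongly connected \emph{within} $B$ once we are allowed to also use $p$; but a subtlety is that the diagonal $p$ lives in $Q$, not necessarily in $B$, whereas strong connectivity of the subquiver $B$ should be checked using paths inside $B$. So the first thing to pin down is the convention: by Lemma~\ref{lem:strong_qbigon}'s phrasing (``strongly connected'' applied to $B$, ``diagonal in $Q$''), the intended reading must be that $B$ together with the diagonal edges is what becomes strongly connected, or equivalently that $B$ is being regarded as a subquiver of $Q$ and we ask whether its vertices lie in a common strongly connected component. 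Under that reading: given any two vertices $a,b$ among $\{x, v_1,\dots,v_m, y, w_1,\dots,w_n\}$, one travels forward along the appropriate branch from $a$ to $y$, then along the diagonal $p$ from $y$ to $x$, then forward along the appropriate branch from $x$ to $b$. This exhibits a path $a \to b$ for every ordered pair, so $B$ (with the diagonal) is strongly connected.

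For the ``only if'' direction, suppose $B$ is strongly connected. Then in particular there is a path from $y$ to $x$ (using whatever edges are available). If $B$ is literally being taken with only its own edges, such a path cannot exist (both branches go $x \to y$ and there is no way back), so the hypothesis forces extra structure in $Q$: there must be a path from $y$ to $x$ in $Q$, which is exactly the diagonal. If instead the convention bundles in the diagonal from the start, the statement of this direction is essentially tautological. Either way the content is: strong connectivity $\Rightarrow$ existence of a $y \to x$ path in $Q$ $\Rightarrow$ $B$ has a diagonal.

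The main obstacle, then, is not any computation but getting the conventions exactly right: clarifying in what ambient structure ``strongly connected'' is being tested (the subquiver $B$ alone, versus $B$ as sitting inside the strongly connected component structure of $Q$), and making sure the ``if'' direction's concatenation of (branch)$\,\cdot\,$(diagonal)$\,\cdot\,$(branch) is a legitimate path — which it is, since concatenability of paths only requires the target of one edge to be the source of the next, and the endpoints match up by construction. I would state the convention explicitly at the start of the proof and then the two implications are each a one-line concatenation argument.
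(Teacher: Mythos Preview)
Your proof is correct and follows essentially the same approach as the paper's: for the ``if'' direction you concatenate (branch to $y$) $\cdot$ (diagonal) $\cdot$ (branch from $x$), and for ``only if'' you observe that strong connectivity forces a $y\to x$ path. The paper's proof is just a terser version of yours, phrasing the second direction as a contrapositive and omitting the explicit concatenation.

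Your extended discussion of the convention (whether ``strongly connected'' refers to paths inside $B$ alone or to the vertices of $B$ lying in one strongly connected component of $Q$) is a fair point---the paper does not spell this out, and the latter reading is indeed what is intended given how the lemma is used later. But this is commentary on the statement rather than on the proof; once the convention is fixed, both arguments are identical.
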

\begin{proof}
    Following the notation in \eqref{fig:nbigon}, first note that if there is a \(y - x\) path, then there is a path between any pair of vertices from the \(v_i\)'s and \(w_j\)'s. In the other direction, assume that there is no \(y - x\) path. Then, \(B\) is not strongly connected.
\end{proof}

\begin{proposition}\label{prop:char_diff_reach_path}
Let $Q$ be a finite connected quiver. Then, the categories \(\Path_{Q}\) and \(\Reach_{Q}\) are isomorphic if and only if $Q$ does not contain directed cycles nor quasi-bigons.
\end{proposition}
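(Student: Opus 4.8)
The plan is to reduce the equivalence to a single combinatorial condition on $Q$ and then analyse that condition. First I would note that there is a canonical functor $\pi\colon\Path_Q\to\Reach_Q$ that is the identity on objects and sends a path $p\colon v\to w$ to the unique morphism of $\Reach_Q(v,w)$ (which exists precisely because $p$ is a path from $v$ to $w$); functoriality is automatic since $\Reach_Q$ is thin by Lemma~\ref{lem:thin}. By construction $\pi$ is bijective on objects and full, so $\pi$ is an isomorphism if and only if it is faithful, i.e.\ if and only if $\Path_Q$ is thin. Conversely, any isomorphism $F\colon\Path_Q\cong\Reach_Q$ induces bijections $\Path_Q(c,c')\to\Reach_Q(Fc,Fc')$, which forces $\Path_Q$ to be thin, again by Lemma~\ref{lem:thin}. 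Hence $\Path_Q\cong\Reach_Q$ if and only if $\Path_Q$ is thin, equivalently if and only if for every ordered pair of vertices $(v,w)$ there is at most one path from $v$ to $w$ in $Q$ (when $v=w$, this forces the only such path to be the trivial one). So it remains to show that this \emph{unique-path} property is equivalent to the absence of directed cycles and quasi-bigons in $Q$.

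For the easy direction I would argue the contrapositive. If $Q$ has a directed cycle, i.e.\ a nontrivial path $v\to v$, then $\Path_Q(v,v)$ contains both this path and $1_v$, so $\Path_Q$ is not thin. If $Q$ contains a quasi-bigon, i.e.\ a subquiver isomorphic to $B_{m,n}$ with extremal vertices $x,y$ as in Figure~\ref{fig:nbigon}, then the upper path $x\to v_1\to\cdots\to v_m\to y$ and the lower path $x\to w_1\to\cdots\to w_n\to y$ are two distinct paths from $x$ to $y$ in $Q$ (they are distinct as sequences of edges; in the degenerate case $m=n=0$ these are exactly the two parallel edges of $B_{0,0}$). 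Either way, $Q$ has more than one path between some ordered pair of vertices.

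The substantive direction is the converse: assuming $Q$ has no directed cycle and no quasi-bigon, I must show that $Q$ has at most one path between any ordered pair of vertices. Absence of directed cycles makes every path of $Q$ simple (a repeated vertex would cut out a nontrivial closed path) and makes the only path $v\to v$ trivial, so I may fix $v\ne w$ and suppose, toward a contradiction, that $p\ne q$ are two paths from $v$ to $w$. Let $a$ be the last vertex of the maximal common prefix of $p$ and $q$; neither of $p,q$ is a prefix of the other (otherwise the longer one would traverse a nontrivial closed path at $w$), so $p$ and $q$ leave $a$ along two distinct edges. Walking along $p$ from $a$, let $b$ be the first vertex that also occurs on $q$ strictly after $a$ (it exists, since $w$ qualifies), and let $p_0$ and $q_0$ be the subpaths of $p$ and $q$ from $a$ to $b$. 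By the choices of $a$ and $b$, the paths $p_0$ and $q_0$ are simple, meet only in $a$ and $b$, and start with distinct edges; hence the subquiver of $Q$ spanned by the vertices and edges of $p_0$ and $q_0$ is isomorphic to $B_{m,n}$, where $m,n$ are the numbers of internal vertices of $p_0,q_0$ (the case $m=n=0$ being the double edge $B_{0,0}$). This quasi-bigon contradicts the hypothesis, so $p=q$.

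I expect the extraction of the quasi-bigon in the last step to be the main obstacle: one has to check carefully that $p_0$ and $q_0$ are internally disjoint — this uses both simplicity of $p$ and $q$ and the minimality in the choice of $b$ — and that the only edge $p_0$ and $q_0$ could have in common is a parallel edge $a\to b$, which occurs precisely in the degenerate case $m=n=0$ and matches the convention for $B_{0,0}$; granting this, the isomorphism with $B_{m,n}$ is immediate. The remaining points — well-definedness and functoriality of $\pi$, the prefix argument, and the fact that a nontrivial closed path contains a directed cycle — are routine.
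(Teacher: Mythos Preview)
Your proof is correct and follows the same route as the paper's: both reduce the isomorphism question to thinness of $\Path_Q$ (the unique-path condition) and then characterise that condition as the absence of directed cycles and quasi-bigons. The paper simply asserts the key implication ``no cycles and no quasi-bigons $\Rightarrow$ at most one path between any two vertices'', whereas your common-prefix/first-reconvergence extraction of a quasi-bigon actually supplies that justification, so your version is the same argument carried out more completely.
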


\begin{proof}
\(\Leftarrow \colon\) Take any two vertices \(v\) and \(w\) of $Q$. As there are no directed cycles, nor quasi-bigons in $Q$, it follows that there is at most one  path from \(v\) to \(w\). Then, the categories \(\Path_{Q}\) and \(\Reach_{Q}\) are isomorphic.

\(\Rightarrow \colon\) Assume that the two categories are isomorphic. They both have the vertices of $Q$ as set of objects, and there is a bijection between \(\Path_{Q}(v,w)\) and $\Reach_Q(v,w)$ for each pair of vertices $v,w$ of $Q$. Assume first $v=w$; then $\Reach_Q(v,v)$ contains only the identity of $v$. Therefore in \(\Path_{Q}(v,v)\) there is only a single morphism, implying the non-existence of directed cycles at $v$. Assume now $v\neq w$. Then, either $\Reach_Q(v,w)=\emptyset=\Path_Q(v,w)$, and there are no paths in $Q$ from $v$ to $w$, or $\Reach_Q(v,w)=\{\Gamma\}=\Path_Q(v,w)$ for a \(v-w\) path \(\Gamma\). In the latter there is precisely one path in $Q$ from $v$ to $w$. Hence, there are no quasi-bigons in $Q$.  
\end{proof}

Note that Proposition~\ref{prop:char_diff_reach_path} provides a categorical enhancement of Remark~\ref{rem:pathalgincalg}. Furthermore, the proof suggests an algorithmic way to verify whether the categories \(\Path_{Q}\) and \(\Reach_{Q}\) are isomorphic. This amounts to checking whether there are simple paths creating cycles or quasi-bigons. To make this description more rigorous, one can use the notion of contractions and path reductions. As such constructions might be of independent interest in handling quivers, we provide the complete description. 

Recall that the \emph{contraction} of a quiver~$Q$ with respect to the edge $e$ is the quiver $Q/e$ obtained from~$Q$ by contracting $e$ to a point. In other words, the edge~$e=(v,v')$ is removed, and its source~$v$ and its target~$v'$ are identified into a new vertex~$w$; edges incident to $v$ or $v'$, in $Q$, are set to be incident to $w$ in $Q/e$. We will only allow contractions of edges of type $(v,w)$ with $v\neq w$; this means that we do not allow contractions of loops. More generally, one can consider contractions of simple paths, see also \cite[Section~1.3]{bang2008digraphs}. We shall consider here contractions of maximal simple paths.

Let \(\Gamma=(e_0,e_1,\dots,e_n)\) be a \emph{simple} path of a quiver $Q$ that is maximal with respect to inclusion, i.e.\ there is no other directed simple path~$\Gamma'$ in $Q$ properly containing~$\Gamma$. 

\begin{definition}\label{def:pathcontr}
 For a finite quiver $Q$, the \emph{path contraction} of the simple path~$\Gamma=(e_0,e_1,\dots,e_n)$ in $Q$ is the quiver $Q/\Gamma$ obtained from $Q$ by contracting all the edges of~$\Gamma$, but $e_0$.
 \end{definition}

Note that, if $\Gamma$ is a single edge not contained in any longer simple path, then the path contraction of~$\Gamma$ in $Q$ yields an isomorphism.  

\begin{example}\label{ex:pathred}
   If $Q$ is the quiver $B_{m,n}$ and $\Gamma$ is the simple path $((x,v_1),(v_1,v_2),\dots,(v_m,y))$ represented in Figure~\ref{fig:nbigon}, then the path contraction of~$\Gamma$ in~$B_{m,n}$ is isomorphic to the quiver $B_{0,n}$; analogously, the path contraction of the path $((x,w_1),\dots, (w_n,y))$ in $B_{m,n}$ is isomorphic to the quiver $B_{m,0}$.
\end{example}

We want to iterate the procedure of path contraction described in Definition~\ref{def:pathcontr}. Consider an ordering on the set of maximal simple paths of~$Q$, and call such set~$\mathfrak{P}$.

 \begin{definition}
The \emph{path reduction} of a quiver $Q$, with respect to a given ordering on its maximal simple paths, is the quiver~$P(Q)$ obtained from $Q$ by path contraction of each element of~$\mathfrak{P}$.
\end{definition}

Note that contractions are not morphisms in the category~$\Quiver$. Nevertheless, composition of contractions is commutative up to isomorphism of quivers, and the procedure of iteratively contracting maximal simple paths yields again a quiver. 

\begin{example}
    Let $Q$ be a directed cycle with at least two edges. Then, its path reduction is isomorphic to the quiver with two vertices and two directed edges with the opposite orientation. Analogously, the path reduction of a quasi-bigon $B_{m,n}$ is the quiver $B_{0,0}$. Observe that, in general, directed cycles in a quiver generate either multiple edges  or loops; see also Figure~\ref{fig:nbigraph}.
\end{example}

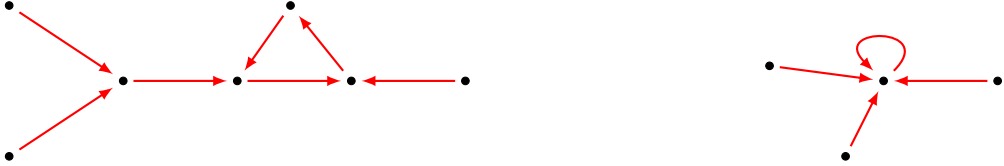
\begin{figure}[h]
\begin{tikzpicture}[baseline=(current bounding box.center)]
		\tikzstyle{point}=[circle,thick,draw=black,fill=black,inner sep=0pt,minimum width=2pt,minimum height=2pt]
		\tikzstyle{arc}=[shorten >= 8pt,shorten <= 8pt,->, thick]		
	
  		\node (x) at (0,1) {};
		\draw[fill] (0,1)  circle (.05);

        \node (z) at (0,-1) {};
		\draw[fill] (0,-1)  circle (.05);
  
		\node (v1) at (1.5,0) {};  
		\draw[fill] (1.5,0)  circle (.05);

        \node (v2) at (3,0) {};  
		\draw[fill] (3,0)  circle (.05);

        \node (v3) at (4.5,0) {};  
		\draw[fill] (4.5,0)  circle (.05);

        \node (v4) at (6,0) {};  
		\draw[fill] (6,0)  circle (.05);
  
		\node (w) at (3.7,1) {};  
		\draw[fill] (3.7,1)  circle (.05);

		\draw[thick, red, -latex] (x) to (v1);
  		\draw[thick, red, -latex] (z) to (v1);
        \draw[thick, red, -latex] (v1) to (v2);

        \draw[thick, red, -latex] (v2) to (v3);

        \draw[thick, red, -latex] (v4) to (v3);

        \draw[thick, red, -latex] (v3) to (w);

        \draw[thick, red, -latex] (w) to (v2);

        \node (xx) at (10,0.2) {};
		\draw[fill] (10,0.2)  circle (.05);

        \node (zz) at (11,-1) {};
		\draw[fill] (11,-1)  circle (.05);
  
		\node (vv1) at (11.5,0) {};  
		\draw[fill] (11.5,0)  circle (.05);

        \node (vv2) at (13,0) {};  
		\draw[fill] (13,0)  circle (.05);

        \draw[thick, red, -latex] (xx) to (vv1);
        \draw[thick, red, -latex] (zz) to (vv1);
        \draw[thick, red, -latex] (vv2) to (vv1);
        \draw[thick, red, -latex] (vv1) to[loop] (vv1);
        \end{tikzpicture}
	\caption{A quiver on the left, and its path reduction on the right.}
	\label{fig:nbigraph}
\end{figure}

The path reduction of a finite quiver $Q$ has no simple paths of length $\geq 2$, and no directed cycles of length $\geq 3$. An orientation ${o}$ on an unoriented graph $G$ is called \emph{alternating} if there exists a partition~$V\sqcup W$ of $V(G)$ such that all elements of $V$ have indegree $0$ and all elements of $W$ have outdegree~$0$ (cf.\ \cite[Definition~2.7]{jason}), see also Figure \ref{fig:altquiv}. We call a quiver~$Q$ alternating if its orientation is alternating. The existence of an alternating orientation is equivalent to $G$ being a bipartite graph. 

\begin{example}
    Let $Q$ be an alternating quiver. Then, $Q$ is isomorphic to its path reduction. In fact, in an alternating quiver there are no simple paths of length $2$.
\end{example}

The effect of taking the path reduction of a quiver is that it reduces the length of simple paths in $Q$. As we do not allow contractions of loops, and contractions do not create new cycles, they preserve the homotopy type of the quivers. A quiver is called \emph{simple} if it has no loops nor multiple edges. Let $Q$ be a finite connected quiver, equipped with an ordering of its maximal simple paths. Then, Proposition~\ref{prop:char_diff_reach_path}  directly implies the following alternative condition for path categories and reachability categories to be isomorphic:

\begin{corollary}
    The categories \(\Path_{Q}\) and \(\Reach_{Q}\) are isomorphic if and only if the path reduction $P(Q)$ of $Q$ is a simple alternating quiver. 
\end{corollary}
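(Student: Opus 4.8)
The plan is to combine Proposition~\ref{prop:char_diff_reach_path} with the structural properties of path reduction established above. By Proposition~\ref{prop:char_diff_reach_path}, since $Q$ is finite and connected, $\Path_Q$ and $\Reach_Q$ are isomorphic if and only if $Q$ contains neither directed cycles nor quasi-bigons. So it suffices to show that $Q$ has no directed cycles and no quasi-bigons if and only if $P(Q)$ is a simple alternating quiver. The key facts I would use are: (i) path reduction neither creates nor destroys directed cycles nor quasi-bigons in an essential way — contracting a maximal simple path leaves a quiver that has a directed cycle (resp.\ quasi-bigon) precisely when the original did; (ii) by the remarks above, $P(Q)$ has no simple paths of length $\geq 2$ and no directed cycles of length $\geq 3$; (iii) an alternating quiver is exactly one admitting a bipartition into sources and sinks, equivalently a bipartite quiver with no simple paths of length $2$.

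First I would argue the forward direction. Suppose $\Path_Q\cong\Reach_Q$, so $Q$ has no directed cycles and no quasi-bigons. I claim $P(Q)$ inherits both properties. For directed cycles: since path contraction of a maximal simple path $\Gamma$ only identifies interior vertices of $\Gamma$ along the unique contracted sub-path, any directed cycle in $Q/\Gamma$ lifts to a directed cycle in $Q$ (a closed walk in the quotient becomes a closed walk upstairs, and one checks it can be taken to be a genuine directed cycle using that $\Gamma$ is simple); hence $Q/\Gamma$, and iterating $P(Q)$, is acyclic. For quasi-bigons: similarly a quasi-bigon $B_{m,n}$ inside $Q/\Gamma$ would pull back to a quiver in $Q$ containing a quasi-bigon (two internally disjoint directed paths sharing only their endpoints), contradicting our hypothesis; here I would invoke Example~\ref{ex:pathred} to see how quasi-bigons behave under contraction. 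Now $P(Q)$ is a finite connected quiver that is acyclic and has no simple paths of length $\geq 2$. Acyclicity together with no length-$2$ simple paths forces: no vertex is both a source-end and target-start of edges, i.e.\ at every vertex either all incident edges point in or all point out (otherwise one gets a simple path of length $2$, using that there are no loops since a loop is a directed cycle). This partitions $V(P(Q))$ into sources $V$ and sinks $W$, so the orientation is alternating; and with only sources and sinks there can be no loops and — since $P(Q)$ has no directed cycles of length $\geq 3$ and a multiple edge plus the bipartite structure would not create a cycle but would be allowed — I would additionally note that no quasi-bigon in $Q$ rules out multiple edges surviving in $P(Q)$ (a $B_{0,0}$ is a quasi-bigon), so $P(Q)$ is also simple. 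Hence $P(Q)$ is a simple alternating quiver.

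Conversely, suppose $P(Q)$ is a simple alternating quiver. Alternating quivers have no directed cycles (a source-sink bipartition forbids closed directed walks) and simple alternating quivers have no quasi-bigons (any quasi-bigon contains a vertex of both positive indegree and positive outdegree, or in the $B_{0,0}$ case a multiple edge, both excluded). So $P(Q)$ has neither directed cycles nor quasi-bigons. It remains to lift this back to $Q$: since path contraction does not create directed cycles or quasi-bigons (contracting a maximal simple path can only merge a path into a single edge, never introduce a new cycle or a new pair of parallel paths — this is the content underlying the earlier observation that contractions preserve homotopy type and that $P(Q)$ is obtained by genuinely \emph{reducing} path lengths), the absence of these configurations in $P(Q)$ forces their absence in $Q$. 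More carefully, I would prove the contrapositive for a single contraction: if $Q$ has a directed cycle or a quasi-bigon, then so does $Q/\Gamma$ for any maximal simple path $\Gamma$, because a directed cycle either is disjoint from the interior of $\Gamma$ (and survives) or uses part of $\Gamma$ (and its image is still a closed directed walk, hence contains a directed cycle), and likewise for quasi-bigons using that the two constituent paths remain internally disjoint directed paths after contraction. Iterating over the ordered set $\mathfrak P$ of maximal simple paths gives the claim for $P(Q)$. Then Proposition~\ref{prop:char_diff_reach_path} yields $\Path_Q\cong\Reach_Q$.

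The main obstacle I anticipate is making the ``lifting'' arguments under path contraction fully rigorous, in both directions simultaneously: one must check that a directed cycle or quasi-bigon in the contracted quiver $Q/\Gamma$ genuinely reflects one in $Q$ (and vice versa), paying attention to the edge $e_0$ of $\Gamma$ that is retained, to the possibility that a contracted simple path becomes a single edge that is then part of a new short cycle, and to the bookkeeping of which vertices get identified. The cleanest route is probably to prove a single clean lemma — ``$Q/\Gamma$ contains a directed cycle (resp.\ quasi-bigon) if and only if $Q$ does'' for a maximal simple path $\Gamma$ — and then invoke it inductively; the ``only if'' direction of that lemma is the delicate part, since one is pulling a configuration back through an identification of vertices and must reconstruct an honest simple path/cycle upstairs. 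Everything else (the source-sink bipartition characterization of alternating, the exclusion of quasi-bigons in simple alternating quivers, the final appeal to Proposition~\ref{prop:char_diff_reach_path}) is routine.
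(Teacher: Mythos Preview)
Your proposal is correct and follows the same overall route as the paper: reduce via Proposition~\ref{prop:char_diff_reach_path} to the claim that $Q$ has no directed cycles nor quasi-bigons if and only if $P(Q)$ is simple alternating, and then establish that equivalence. The paper's own proof simply asserts the latter equivalence in one line (``now observe that\ldots''), whereas you supply the detailed lifting arguments under path contraction; your identification of the single-step lemma ``$Q/\Gamma$ has a directed cycle or quasi-bigon iff $Q$ does'' as the only nontrivial point is exactly right and is more than the paper itself provides.
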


\begin{proof}
By Proposition~\ref{prop:char_diff_reach_path}, the categories \(\Path_{Q}\) and \(\Reach_{Q}\) are isomorphic if and only if $Q$  does not contain directed cycles nor quasi-bigons. Now observe that $Q$  does not contain directed cycles nor quasi-bigons if and only if, for any chosen ordering on its maximal simple paths, the  path reduction of $Q$ is an alternating directed graph, with no loops nor multiple edges.
\end{proof}

\subsection{Functoriality}
We show here that going from quivers to  reachability categories is functorial. This will be needed in applications in Section \ref{sec:applications}. Let $\mathbf{Thin}$ denote the category of small thin categories, which is the full subcategory of  $\Cat$ of thin categories. 

\begin{proposition}
    \label{prop:reach_functoriality}
Taking the reachability category yields a functor
\begin{equation*}\label{eq:reachfunct}
\mathrm{Reach}\colon \Quiver\to \mathbf{Thin} 
\end{equation*}
from the category of quivers to the category of thin categories.
\end{proposition}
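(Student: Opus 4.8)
The plan is to define $\Reach$ on objects and on morphisms, and then to verify functoriality by exploiting the fact that the targets are thin categories: a functor into a thin category is completely determined by its effect on objects, so most of the coherence conditions are automatic. On objects, $\Reach$ sends a quiver $Q=(V,E,s,t)$ to the category $\Reach_Q$ of Definition~\ref{def:reach_G}, which is thin by Lemma~\ref{lem:thin}. Equivalently, $\Reach_Q$ is the thin category associated to the preorder on $V$ in which $v\leq w$ iff $v=w$ or there is a path from $v$ to $w$ in $Q$.

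On morphisms, given $f\colon Q\to Q'$ with components $f_V\colon V\to V'$ and $f_E\colon E\to E'$, I would define $\Reach_f\colon \Reach_Q\to\Reach_{Q'}$ on objects by $v\mapsto f_V(v)$. For a non-identity arrow $v\to w$ of $\Reach_Q$ (necessarily unique, with $v\neq w$) there is by definition a path $(e_0,\dots,e_n)$ from $v$ to $w$ in $Q$; by Lemma~\ref{lem:paths_to_paths}, together with the compatibility of $f$ with source and target recorded in Remark~\ref{rem:quiver_morphisms}, the sequence $(f_E(e_0),\dots,f_E(e_n))$ is a path from $f_V(v)$ to $f_V(w)$ in $Q'$, so $\Reach_{Q'}(f_V(v),f_V(w))=*$. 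We let $\Reach_f$ send the arrow $v\to w$ to this unique arrow $f_V(v)\to f_V(w)$, and identities to identities. (If $v\neq w$ but $f_V(v)=f_V(w)$, the target arrow is simply the identity of $f_V(v)$, which is consistent, since the target is thin.) In preorder language, this is just the statement that $f_V$ is monotone, hence a functor of thin categories.

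Then I would check that $\Reach_f$ is a functor: it preserves identities by construction, and for composable arrows $u\to v$, $v\to w$ in $\Reach_Q$ both $\Reach_f$ of the composite and the composite of the images are arrows $f_V(u)\to f_V(w)$ in the thin category $\Reach_{Q'}$, hence equal. Finally, $\Reach_{\id_Q}=\id_{\Reach_Q}$ since it is the identity on objects, and for $Q\xrightarrow{f}Q'\xrightarrow{g}Q''$ both $\Reach_{g\circ f}$ and $\Reach_g\circ\Reach_f$ act on objects by $v\mapsto g_V(f_V(v))$ (because taking the vertex component is functorial, being evaluation at $V\in\mathbf 2$), and two functors into the thin category $\Reach_{Q''}$ that agree on objects agree, so $\Reach_{g\circ f}=\Reach_g\circ\Reach_f$. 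I do not expect a genuine obstacle: the only non-formal input is that morphisms of quivers carry paths to paths with matching endpoints, which is precisely Lemma~\ref{lem:paths_to_paths} and Remark~\ref{rem:quiver_morphisms}; everything else is forced by thinness of the target categories.
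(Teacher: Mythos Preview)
Your argument is correct and follows essentially the same approach as the paper: define $\Reach$ on objects via Definition~\ref{def:reach_G}, use Lemma~\ref{lem:paths_to_paths} to show that a quiver morphism induces a well-defined map on Hom-sets, and invoke Lemma~\ref{lem:thin} to land in $\mathbf{Thin}$. Your write-up is in fact more explicit than the paper's in spelling out why thinness forces all the coherence checks (functoriality of $\Reach_f$, preservation of identities and composition by $\Reach$) to be automatic once the object-level maps agree, which the paper leaves as ``it is easy to see''.
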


\begin{proof}
By Lemma \ref{lem:paths_to_paths} a morphism $\phi\colon Q\to Q'$ of quivers sends a path in $Q$ to a path in~\(Q'\). By definition, $\phi$ is a natural transformation. Then, define $\mathrm{Reach}(\phi)\colon \Reach_Q\to\Reach_{Q'}$ to be the functor induced by the natural transformation $\phi$ on the objects, and such that, to a non-trivial morphism $f\colon v\to w$ in $\Reach_Q$, it associates the unique morphism between $\phi(v)$ and~$\phi(w)$. It is easy to see that $\mathrm{Reach}$ respects compositions of morphisms of quivers, and that it yields a functor from the category of quivers to the category of small categories. Now, by Lemma~\ref{lem:thin}, the reachability category is a thin category; hence, the statement follows.
\end{proof}

\begin{remark}\label{rem:reachascongr}
The reachability category \(\Reach_{Q}\) can alternatively be  constructed by taking quotients of the Hom-sets of the path category $\Path_Q$. In fact, one can define a functor $F\colon \Path_{Q} \ra \Path_{Q} /\!\! \sim  $ that is the identity on objects, and such that  the congruence~\(\sim\) collapses all paths between two objects to a single morphism. However, this construction would lead us to define functorial quotients on \(\mathbf{Cat}\). Proposition~\ref{prop:reach_functoriality} provides a direct proof of the functoriality.
\end{remark}
    
\subsection{Topological properties}\label{sec:topological_properties}
We proceed with a  comparison of path categories and reachability categories from a topological viewpoint. To do so, we study the nerves of these categories. In the follow-up, by geometric realisation $|Q|$ of a quiver $Q$, we shall mean the geometric realisation of $Q$ as an undirected graph (i.e.~by forgetting the directions of the edges and then taking the realisation of the obtained CW-complex). Analogously, for a simplicial set~$S$ (e.g.~the nerve of a category), $|S|$ denotes its geometric realisation. First, recall the following result:

\begin{proposition}[{\cite[Ex.~4.3]{citterio}}]\label{prop:hogr=homcat}
	The classifying space $|\Nerve(\mathbf{Path}_Q)|$ of the nerve of the path category  of a quiver $Q$ has the homotopy type of the geometric realisation $|Q|$.
\end{proposition}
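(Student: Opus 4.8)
The plan is to realise $|Q|$ as a subcomplex of $|\Nerve(\Path_Q)|$ and to deformation retract the latter onto it by iteratively ``absorbing the last edge'' of a path; the most economical way to organise this is as an acyclic discrete Morse matching on the simplicial set $\Nerve(\Path_Q)$ whose critical cells are exactly the cells of $|Q|$. First I would identify the relevant subcomplex: the objects of $\Path_Q$ are the vertices of $Q$ and each edge $e$ of $Q$ is a path of length one, hence a morphism of $\Path_Q$; these supply all of the $0$-simplices and certain nondegenerate $1$-simplices of $\Nerve(\Path_Q)$, and the sub-simplicial set $S \subseteq \Nerve(\Path_Q)$ they generate has no nondegenerate simplices in dimension $\geq 2$ (a composite of two edges is a path of length two, not an edge). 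Hence $|S|$ is the CW complex with one $0$-cell per vertex and one $1$-cell per edge of $Q$, that is $|S| \cong |Q|$, and the inclusion $|Q| \simeq |S| \hookrightarrow |\Nerve(\Path_Q)|$ is the claimed homotopy equivalence.

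Next I would define the matching. A nondegenerate $n$-simplex of $\Nerve(\Path_Q)$ is a string $(p_1,\dots,p_n)$ of composable paths, each of length $\geq 1$. If $n\geq 1$ and $p_n$ has length $\geq 2$, write $p_n = q\cdot e$ with $e$ its last edge, and pair $(p_1,\dots,p_n)$ with the $(n{+}1)$-simplex $(p_1,\dots,p_{n-1},q,e)$, of which it is the face that composes the last two entries. This is a well-defined matching: a simplex whose last entry has length $\geq 2$ is matched upward; a simplex with $\geq 2$ entries whose last entry is a single edge is matched downward (compose its last two entries); and the only unmatched simplices are the $0$-simplices and the $1$-simplices $(e)$ with $e$ an edge of $Q$, i.e.\ precisely the cells of $S$. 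For acyclicity I would use the weight $\sum_i |p_i|$: matching upward preserves it, and from an upper cell $\tau$ the unique codimension-one face that is again matched upward (the other faces retain a single edge as last entry) is the one deleting the terminal edge, which strictly lowers the weight; hence no gradient path is closed. Discrete Morse theory then yields a collapse of $|\Nerve(\Path_Q)|$ onto the subcomplex spanned by the critical cells, which is $|S|\cong|Q|$, and the proposition follows.

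The main obstacle is a regularity issue: $|\Nerve(\Path_Q)|$ is a CW complex but not in general regular --- when $Q$ has loops or multiple edges a nondegenerate simplex of the nerve can have two of its faces identified --- so discrete Morse theory does not apply verbatim. I would handle this either by appealing to a version of discrete Morse theory for simplicial sets, or by transporting the matching to the barycentric subdivision $\mathrm{Sd}\,\Nerve(\Path_Q)$, the order complex of the poset of nondegenerate simplices, which is an honest simplicial complex with the same realisation; re-checking the pairing there is the bookkeeping to be done. An alternative route altogether is to write $Q$ as built from its vertex set by attaching edges one at a time, so that $\Path_Q$ is a corresponding iterated pushout of categories, and to show that the classifying space functor sends these pushouts to homotopy pushouts, reducing everything to the trivial case $Q = I_1$; there the nontrivial point is exactly that these particular pushouts of categories are homotopy pushouts.
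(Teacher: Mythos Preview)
The paper does not prove this proposition; it is quoted from the literature with the citation \cite[Ex.~4.3]{citterio} and used as a black box. So there is no in-paper argument to compare against.

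Your discrete Morse argument is sound. The pairing ``split off the last edge'' is well defined on nondegenerate simplices, every nondegenerate simplex other than the vertices and the length-one $1$-simplices is matched exactly once, and your acyclicity check is correct: from an upper cell $\tau=(p_1,\dots,p_{n-1},q,e)$ the only face besides the matched one $d_n\tau$ whose last entry is not the single edge $e$ is $d_{n+1}\tau=(p_1,\dots,p_{n-1},q)$, and passing to it strictly lowers the total length. (Small wording fix: that face is matched upward only when $|q|\ge 2$, so say ``at most one'' rather than ``the unique''; acyclicity is unaffected.) The critical cells are exactly the $0$- and edge $1$-cells, i.e.\ $|Q|$.

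You are right that the honest obstacle is regularity: when $Q$ has loops or parallel edges, $\Nerve(\Path_Q)$ is only a simplicial set, so Forman's CW theorem does not apply verbatim. Of your two workarounds, the cleanest is to invoke a version of discrete Morse theory formulated directly for simplicial sets (acyclic matchings on the poset of nondegenerate simplices), which yields the desired collapse; transporting the matching to the barycentric subdivision also works but is heavier bookkeeping. Your pushout alternative is viable too: $\Path$ is a left adjoint and hence preserves the pushouts attaching one edge at a time, and each such pushout of categories is along a cofibration (an injective-on-objects functor), so the nerve sends it to a homotopy pushout; this reduces to the trivial case of a single edge.
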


By Proposition~\ref{prop:hogr=homcat}, the classifying space of the nerve \(\Nerve(\Path_Q)\) has the homotopy type of a wedge of circles. We provide some examples.

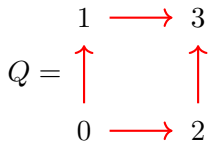
\begin{figure}
\begin{tikzpicture}
\tikzstyle{point}=[circle,thick,draw=black,fill=black,inner sep=0pt,minimum width=2pt,minimum height=2pt]
\tikzstyle{arc}=[shorten >= 3pt,shorten <= 3pt,->, thick, red]
\node[] (0) at  (0,0) {0};
\node[] (1) at  (0,1.5) {1};
\node[] (2) at  (1.5,0) {2};
\node[] (3) at  (1.5,1.5) {$3$};
	
\draw[arc] (0) to (1);
\draw[arc] (0) to (2);
\draw[arc] (1) to (3);
\draw[arc] (2) to (3);
	
\node at (-0.65,0.75) {\(Q=\)};
\end{tikzpicture}
\caption{The quiver $Q$.}
\label{fig:squaredec}
\end{figure}

\begin{example}
Consider the quiver \(Q\) as illustrated in Figure~\ref{fig:squaredec}. Observe that, in virtue of Proposition~\ref{prop:char_diff_reach_path}, the associated path category and reachability category are not isomorphic. By Proposition~\ref{prop:hogr=homcat}, the nerve of \(\Path_Q\) is homotopic to the circle~\(S^1\). The nerve of \(\Reach_Q\), on the other hand, is contractible since \(\Reach_Q\) has an initial object 0. The graph representations of these two categories are illustrated in Figure~\ref{fig:pathreachGG},
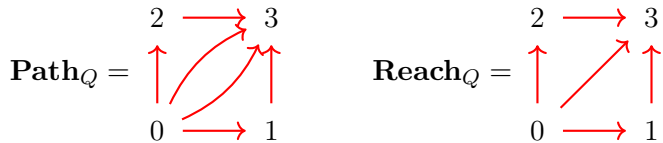
\begin{figure}
\begin{tikzpicture}
\tikzstyle{point}=[circle,thick,draw=black,fill=black,inner sep=0pt,minimum width=2pt,minimum height=2pt]
\tikzstyle{arc}=[shorten >= 3pt,shorten <= 3pt,->, thick,red]

\node[] (0') at (5,0) {0};
\node[] (1') at (6.5,0) {1};
\node[] (2') at (5,1.5) {2};	
\node[] (3') at (6.5,1.5) {3};

\draw[arc] (0') to (1');
\draw[arc] (0') to (2');
\draw[arc,bend left = 20] (0') to (3');
\draw[arc,bend right = 20] (0') to (3');
\draw[arc] (1') to (3');
\draw[arc] (2') to (3');

\node at (3.85,0.75) {\(\Path_Q=\)};

\node[] (0'') at (10,0) {0};
\node[] (1'') at (11.5,0) {1};
\node[] (2'') at (10,1.5) {2};	
\node[] (3'') at (11.5,1.5) {3};

\draw[arc] (0'') to (1'');
\draw[arc] (0'') to (2'');
\draw[arc] (0'') to (3'');
\draw[arc] (1'') to (3'');
\draw[arc] (2'') to (3'');

\node at (8.75,0.75) {\(\Reach_Q=\)};	
\end{tikzpicture}
\caption{The path category and the reachability category of the quiver $Q$ in Figure \ref{fig:squaredec}.}
\label{fig:pathreachGG}
\end{figure}
where we have omitted the identity morphisms on the vertices.
\end{example}

Recall that an equivalence of categories induces a homotopy equivalence between the nerves. 

\begin{example}
    Consider the quiver $L_n$ illustrated in Figure~\ref{fig:nbidlin}. Its path category is homotopic to $\bigvee_{i=1}^n S^1$. However, the associated reachability category is equivalent to the category with one object and one identity morphism. In fact, imposing the reachability condition implies that the morphisms $v_i\to v_{i+1}$ and $v_{i+1}\to v_i$ are inverses of one another. As an equivalence of categories preserves the homotopy type of the nerve, we get $|\Nerve(\Reach_{L_n})|\simeq *$. 
\end{example}

\begin{figure}[h]
\begin{tikzpicture}[baseline=(current bounding box.center)]
		\tikzstyle{point}=[circle,thick,draw=black,fill=black,inner sep=0pt,minimum width=2pt,minimum height=2pt]
		\tikzstyle{arc}=[shorten >= 8pt,shorten <= 8pt,->, thick, red]
		
		\node[left] (v0) at (0,0) {$v_0$};
		\draw[fill] (0,0)  circle (.05);
		\node (v1) at (1.5,0) {};
		\draw[fill] (1.5,0)  circle (.05);
		\node (v2) at (3,0) {\dots};
		\node (v4) at (4.5,0) {};
		\draw[fill] (4.5,0)  circle (.05);
		\node[right] (v5) at (6,0) {$v_n$};
		\draw[fill] (6,0)  circle (.05);
		
		\draw[thick, red, -latex] (v0) to[bend left] (v1);
  		\draw[thick, red, -latex] (v1) to[bend left] (v0);
        \draw[thick, red, -latex] (v1) to[bend left] (v2);
  		\draw[thick, red, -latex] (v2) to[bend left] (v1);

        \draw[thick, red, -latex] (v2) to[bend left] (v4);
  		\draw[thick, red, -latex] (v4) to[bend left] (v2);
        \draw[thick, red, -latex] (v5) to[bend left] (v4);
  		\draw[thick, red, -latex] (v4) to[bend left] (v5);
        \end{tikzpicture}
	\caption{The quiver $L_n$.}
	\label{fig:nbidlin}
\end{figure}
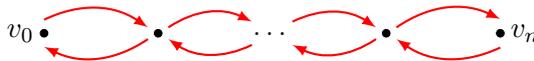

Motivated by the previous examples, we can state the following: 

\begin{proposition}
    \label{lem:strongconnecomp}
	Let \(Q\) be a strongly connected quiver. Then, \(\Reach_{Q}\) is contractible.
\end{proposition}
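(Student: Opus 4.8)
The plan is to show that $\Reach_Q$ is equivalent, as a category, to the terminal category $\mathbf{1}$ (one object, one identity morphism), and then invoke the fact recalled just before the statement that an equivalence of categories induces a homotopy equivalence of nerves, so that $|\Nerve(\Reach_Q)| \simeq |\Nerve(\mathbf{1})| = *$. (Here I should first note $Q$ is nonempty; if $Q$ has no vertices the statement is vacuous or one adopts the convention that the empty category is not strongly connected, so I will assume $Q$ has at least one vertex.)

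To produce the equivalence, I would argue directly from the definition of strong connectedness. Since $Q$ is strongly connected, for every pair of vertices $v,w$ there is a path from $v$ to $w$ \emph{and} a path from $w$ to $v$; hence by Definition~\ref{def:reach_G} both $\Reach_Q(v,w)$ and $\Reach_Q(w,v)$ are singletons $\{*\}$. Because $\Reach_Q$ is thin (Lemma~\ref{lem:thin}), the composite $v \to w \to v$ must be the unique endomorphism of $v$, namely $\id_v$, and symmetrically $w \to v \to w$ is $\id_w$; thus every morphism in $\Reach_Q$ is an isomorphism, and in fact any two objects are uniquely isomorphic. This says $\Reach_Q$ is a \emph{contractible groupoid} (an indiscrete thin category). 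The functor $p\colon \Reach_Q \to \mathbf{1}$ to the terminal category is then fully faithful (each Hom-set on both sides is a singleton) and essentially surjective (trivially), hence an equivalence of categories; alternatively one exhibits a pseudo-inverse by picking any vertex $v_0$ and the functor $\mathbf{1} \to \Reach_Q$ sending the object to $v_0$, with the unique-isomorphism property supplying the natural isomorphism $\id_{\Reach_Q} \cong v_0 \circ p$.

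Finally I would conclude: equivalent categories have homotopy equivalent nerves (as recalled in the line ``an equivalence of categories induces a homotopy equivalence between the nerves'' preceding this proposition), and $\Nerve(\mathbf{1}) = \Delta^0$ has contractible realisation, so $|\Nerve(\Reach_Q)|$ is contractible. I do not anticipate a serious obstacle here: the only subtlety is the bookkeeping that the two composites are forced to be identities, which is immediate from thinness, and the trivial edge case of the empty quiver; everything else is a direct unwinding of definitions plus the already-cited nerve/equivalence fact. An alternative to invoking the equivalence-of-nerves principle would be to observe directly that $\Reach_Q$ has a terminal (equivalently initial) object whenever $Q$ is strongly connected and nonempty, since any vertex $v_0$ receives a unique morphism from every vertex, and a category with a terminal object has contractible nerve; I would likely mention this as the quickest route.
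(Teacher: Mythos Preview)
Your proposal is correct and follows essentially the same route as the paper: the paper also exhibits the functor $\Reach_Q \to \mathbf{1}$ as an equivalence of categories and invokes the homotopy equivalence of nerves. Your version simply spells out why this functor is an equivalence (via thinness forcing mutual inverses), which the paper leaves implicit, and your alternative observation about an initial/terminal object is a harmless extra remark.
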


\begin{proof}
    Choose an object $q$ of $Q$, and let $\mathbf{1}$ be the category with the one object \(q\) and a single identity morphism. Then, the functor $F\colon \Reach_{Q} \to \mathbf{1}$ is an equivalence of categories. Therefore, the nerve of $\Reach_{Q}$ is homotopic to the nerve of $\mathbf{1}$, which is contractible. 
\end{proof}

By Proposition~\ref{prop:char_diff_reach_path} and Proposition~\ref{prop:hogr=homcat}, the nerve~$|\Nerve(\Reach_Q)|$ of  a finite connected quiver~$Q$ with no directed cycles nor quasi-bigons is  homotopic to the geometric realisation $|Q|$. On the other hand, if $Q$ contains directed cycles or quasi-bigons, and $H$ is a strongly connected component of $Q$, then by Proposition~\ref{lem:strongconnecomp} $\Reach_H$ is contractible. {Taking}  reachability categories is functorial. Therefore,  the inclusion of $H$ in $Q$ induces a functor $\Reach_H\to \Reach_Q$, hence a continuous map  $\ast\simeq |\Nerve(\Reach_H)| \to |\Nerve(\Reach_Q)|$ which is a cofibration. In other words, replacing a strongly connected component in $Q$ with a single vertex does not change the homotopy type of $|\Nerve(\Reach_Q)|$ (see also \cite[Lemma~10.2]{BjorTopMeth}). Although, by Proposition~\ref{prop:char_diff_reach_path}, the categories $\Path_{Q}$ and $\Reach_Q$ are not isomorphic when $Q$ contains directed cycles, we get the following:

\begin{proposition}\label{prop:cond}
    Let $Q$ be a finite connected quiver with no quasi-bigons. Then, there is a homotopy equivalence 
    \[
    |\Nerve(\Path_{c(Q)})| \simeq |\Nerve(\Reach_Q)| \ ,
    \]
    where $c(Q)$ is the condensation  of $Q$. 
    \end{proposition}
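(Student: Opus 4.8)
The plan is to reduce the statement to a comparison between the reachability category of $Q$ and the reachability category (equivalently the path category) of the condensation $c(Q)$, and then apply the already-established results. First I would observe that, since $Q$ has no quasi-bigons, neither does $c(Q)$: a quasi-bigon in $c(Q)$ would lift to a pair of internally-disjoint paths in $Q$ between two strongly connected components, and since $c(Q)$ is acyclic by construction, these paths cannot be rerouted through a shared strongly connected component, so they would assemble (after possibly contracting each component to a single vertex-path, as in the path-reduction discussion) into a quasi-bigon in $Q$, a contradiction. Moreover $c(Q)$ has no directed cycles by definition. Hence by Proposition~\ref{prop:char_diff_reach_path} applied to $c(Q)$, the categories $\Path_{c(Q)}$ and $\Reach_{c(Q)}$ are isomorphic, so it suffices to prove $|\Nerve(\Reach_{c(Q)})| \simeq |\Nerve(\Reach_Q)|$.

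Next I would exhibit an explicit equivalence of categories $\Reach_Q \to \Reach_{c(Q)}$. On objects it sends a vertex $v$ of $Q$ to the strongly connected component $[v]$ containing it; on morphisms, since both categories are thin (Lemma~\ref{lem:thin}), one only needs to check that there is a path $v\to w$ in $Q$ if and only if there is a path $[v]\to[w]$ in $c(Q)$, which is immediate from the definition of condensation. This functor is essentially surjective (every component is $[v]$ for some $v$) and, being a functor between thin categories that is a bijection on Hom-sets whenever a morphism exists, it is fully faithful. An equivalently clean route: the functor is the quotient identifying mutually-reachable objects, i.e. the passage to a skeleton, which is an equivalence by the usual argument (this is the "categorical condensation" alluded to in the introduction). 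Either way, $\Reach_Q$ and $\Reach_{c(Q)}$ are equivalent categories.

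Finally, since an equivalence of categories induces a homotopy equivalence of nerves (recalled in the excerpt just before the $L_n$ example), we get $|\Nerve(\Reach_Q)| \simeq |\Nerve(\Reach_{c(Q)})| \cong |\Nerve(\Path_{c(Q)})|$, where the last identification uses the isomorphism of categories from the first paragraph. Chaining these gives the claimed homotopy equivalence. I would also remark, for context, that combining with Proposition~\ref{prop:hogr=homcat} one further obtains $|\Nerve(\Reach_Q)| \simeq |c(Q)|$, a wedge of circles; this is not needed for the statement but makes the geometric content transparent.

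The main obstacle is the first paragraph: verifying that the absence of quasi-bigons is inherited by $c(Q)$. The subtlety is that "disjoint paths" in $c(Q)$ between components $X$ and $Y$ correspond to paths in $Q$ that may share vertices inside the individual strongly connected components even while their component-sequences are internally disjoint; one must argue that such a configuration still produces a genuine quasi-bigon subquiver of $Q$ — choosing representative simple paths within each traversed component and splicing them — or else that $X$ and $Y$ would already coincide. Handling the edge cases ($m$ or $n$ equal to $0$, i.e. the $B_{0,0}$ multi-edge situation, and the possibility that a "quasi-bigon of $c(Q)$" arises from a single component of $Q$ that fails to be strongly connected — which cannot happen) requires care but no deep idea; everything else is a direct application of Proposition~\ref{prop:char_diff_reach_path}, Lemma~\ref{lem:thin}, Proposition~\ref{prop:hogr=homcat}, and the homotopy-invariance of the nerve under equivalence.
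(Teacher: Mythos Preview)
Your proposal is correct and follows the same two-step strategy as the paper: first identify $\Path_{c(Q)}$ with $\Reach_{c(Q)}$ via Proposition~\ref{prop:char_diff_reach_path}, then pass from $\Reach_{c(Q)}$ to $\Reach_Q$. The only noteworthy difference is in the second step: the paper invokes the collapsing/cofibration discussion preceding the proposition (together with Proposition~\ref{lem:strongconnecomp}) to argue that contracting strongly connected components does not change the homotopy type of $|\Nerve(\Reach_Q)|$, whereas you give a direct equivalence of categories $\Reach_Q \to \Reach_{c(Q)}$ by recognising it as the passage to a skeleton. Your route is cleaner and self-contained, and you are also more explicit than the paper about verifying that $c(Q)$ satisfies the hypotheses of Proposition~\ref{prop:char_diff_reach_path}; the paper simply asserts ``in view of Proposition~\ref{prop:char_diff_reach_path}'' without spelling out why $c(Q)$ has no quasi-bigons. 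The subtlety you flag in your last paragraph is genuine but minor, and the paper does not address it at all.
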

\begin{proof}
In view of Proposition~\ref{prop:char_diff_reach_path}, $|\Nerve(\Path_{c(Q)})| $ and $ |\Nerve(\Reach_{c(Q)})|$ are homotopy equivalent. Proceeding as in the proof of Proposition~\ref{lem:strongconnecomp}, collapsing the directed cycles of $Q$ does not change the homotopy type of $\Reach_Q$, yielding an homotopy equivalence between $|\Nerve(\Reach_Q)|$ and $ |\Nerve(\Reach_{c(Q)})|$. The statement follows.  
\end{proof}

Proposition~\ref{prop:cond} does not hold if the quiver $Q$ is the Hasse diagram of a poset which is not a tree (as a graph); hence, we can not exhibit a complete classification of the  homotopy classes of $\Reach_Q$. In fact, we have the following example:

\begin{example}\label{ex:face_poset_and_Reach}
Let $P$ be the face poset of a simplicial complex~$X$. Its associated reachability category is the poset $P$ itself, seen as a category. The nerve of $\Reach_{P}$ is homotopy equivalent to the barycentric subdivision of $X$. Therefore, if the simplicial complex~$X$ has non-trivial homotopy groups in degree $\geq 2$, and $Q$ is the Hasse diagram of~$P$, then the nerve of $\Reach_Q$ has the same (non-trivial) homotopy type of $X$.  On the other hand, the homotopy groups of the nerve of a path category are always trivial in degree~$\geq 2$ due to Proposition~\ref{prop:hogr=homcat}.
\end{example}
This example suggests that we can find interesting (topological and categorical) information in $\Reach_Q$, provided $Q$ has quasi-bigons. In fact, the homology of $\Reach_Q$ has recently found important applications in  homology theories of digraphs; in particular, the homology of $\Reach_Q$ can be interpreted as the limit of  the magnitude-path spectral sequence~\cite{asao,hepworth2023reachability}.

\section{The reachability poset}
The  aim of this section is to associate to each quiver a poset: the reachability poset. As homological invariants of reachability categories will also be invariants of reachability posets, we will use this equivalence in the applications of Section~\ref{sec:applications}. 

A \emph{preordered set}, or simply \emph{preorder}, is a pair $(X,\leq )$ consisting of a set $X$ and a binary relation $\leq $ that is reflexive and transitive. Therefore, any preordered set yields a thin category by declaring a unique morphism \(x \ra y\) whenever \(x \leq y\). Vice versa, a thin category yields, up to isomorphism of categories, a preordered set by setting the relation $x\leq y$ between objects $x$ and $y$ if and only if there is the unique morphism $x\to y$. We can identify the category of thin categories with the category of preorders. Given a preorder, there is a standard way to construct a poset: first, define an equivalence relation $\simeq$ on the elements of the preorder such that $p\simeq q$ if and only if $p\leq q$ and $q\leq p$. Then, define the poset as the quotient obtained from the preorder via the given equivalence relation. As we are interested in extending this process to a functor from thin categories to posets, we make use of the standard notion of posetal reflection. For the sake of completeness, we present the construction in details. 

Recall that a category \(\C\) is \emph{skeletal} if each of its isomorphism classes has just one object. The \emph{skeleton} \(\sk \C\) of  \(\C\) is the unique (up to isomorphism) skeletal category equivalent to \(\C\). Assuming the axiom of choice, every category has a skeleton obtained by choosing one object in each isomorphism class of $\C$, and then by defining \(\sk\C\) to be the full subcategory on this collection of objects (cf.\ \cite[Proposition~2.6.4]{richter}). The skeleton construction yields  an equivalence of categories \(\sk\C \hookrightarrow \C\) between the skeletal subcategory and the category $\C$ itself. However, this construction cannot be promoted to an endofunctor $ \sk \colon \Cat \to \Cat $; it yields in fact a pseudofunctor. Roughly speaking, for categories $\bA,\bB$ and a functor $F\colon \bA\to \bB$, it is possible to \textit{choose} a functor \(\sk F \colon \sk\bA \ra \sk\bB\), but these choices will not be strictly functorial. To formalise it, we first need to recall the definition of reflection, see e.g.\ \cite[Definition~4.16]{concrcat}:

\begin{definition}
Let $\C$ be a subcategory of $\D$, and $d$ an object of $\D$. A \emph{reflection} for $d$ is a morphism $\rho\colon d\to c$ in $\D$ from $d$ to $c\in \C$ such that the following universal property is satisfied: for any $f\colon d\to c'$ in $\D$ with $c'\in \C$, there exists a unique morphism $f'\colon c\to c'$ of $\C$ such that this diagram 
\begin{center}
 \begin{tikzcd}
d\arrow[dr,"f"'] \arrow[r, "\rho"] & c \arrow[d, "f'"]\\
  & c'
 \end{tikzcd}    
 \end{center}
 commutes.
\end{definition}

A subcategory $\C$ of $\D$ with the property that each object $d\in \D$ has a reflection is called a \emph{reflective} subcategory. Equivalently, a full subcategory $\C$ of a category $\D$ is said to be {reflective} in $\D$ if the inclusion functor from $\C$ to $\D$ has a left adjoint. For reflective subcategories, the following result is standard:

\begin{proposition}[{\cite[Proposition 4.22]{concrcat}}]
    Let $\C$ be a reflective subcategory of $\D$, and for each $d\in \D$ let $\rho_d\colon d\to c_d$ be a reflection. Then, there exists a unique functor $R\colon \D\to \C$ such that:
    \begin{itemize}
        \item $R(d)=c_d$ for all $d$ in $\D$;
        \item for each morphism $f\colon d\to d'$ in $\D$, the diagram 
    \begin{center}
 \begin{tikzcd}
d\arrow[d,"f"'] \arrow[r, "\rho_d"] & R(d) \arrow[d, "R(f)"]\\
 d' \arrow[r, "\rho_{d'}"] & R(d')
 \end{tikzcd}    
 \end{center}
 commutes.
    \end{itemize}
\end{proposition}

The full subcategory of skeletal categories is reflective in the category of small categories (see e.g.\ \cite[Corollary 4.2]{BORCEUX2023109110}, along with \cite{MR4130917}). Analogously, this holds true for the category of thin categories. As a consequence, after identifying thin categories and preorders, we can construct a functor from the subcategory of preorders in \(\Cat\) to the category of posets as follows. Let $\Preord$ be the category of preorders and order-preserving maps, and let $\Poset$ be the full subcategory  of posets.
Taking the  quotient  turning a preorder into a poset is a reflection (cf.\ \cite[Section~4.17]{concrcat}). Then, there is a unique functor $L\colon \Preord\to \Poset$ induced by the described reflections. Sometimes, this functor is called a \emph{posetal reflection}, see e.g.\ \cite{raptis, zbMATH07066042, puca2023obstructions}. Furthermore, each other choice would yield a different  functor,  but all such functors are naturally isomorphic. Therefore, using the described choice of functors, we get the composition
\begin{equation}\label{eq:comporeach}%
\cR\colon
\Quiver
\xrightarrow{\mathrm{Reach}}\mathbf{Thin}\cong\Preord
\xrightarrow{L}\Poset
\end{equation}
that associates to a quiver the poset $\cR(Q)$.

\begin{definition}
    For a finite quiver $Q$, the poset~$\cR(Q)$ is called the \emph{reachability poset} of $Q$.
\end{definition}

By construction,  the objects of $\cR(Q)$ are the vertices of $Q$ modulo the equivalence relation~$\simeq $ which identifies $v$ and $w$ if and only if there are paths from $v$ to $w$ and from~$w$ to~$v$. Denote by~$[v]$ the equivalence class of $v$. Then,  $[v]\leq [w]$ if and only if there are representatives~$v$ of $[v]$ and $w$ of $[w]$, together with a directed path from $v$ to $w$ in $Q$.

For a poset $P$ and a quiver $Q$, we can look at the posets $P$ and $\cR(Q)$ as categories, and consider functors between them. With this convention, the reachability category and reachability poset satisfy the following universal property:

\begin{proposition}\label{prop:condensuni}
Let \(F\colon \Reach_Q \ra P\) be a functor. Then, there exists a unique functor \(G\colon \cR(Q)\to P\) making the diagram 
\begin{center}
 \begin{tikzcd}
Q \arrow[r,mapsto] & \Reach_Q \arrow[dr,"F"'] \arrow[r, "L"] & \cR(Q) \arrow[d,dotted, " G"]\\
  & & P
 \end{tikzcd}    
 \end{center}
 commute. 
\end{proposition}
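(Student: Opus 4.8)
The plan is to verify the universal property directly using the construction of $\cR(Q)$ as the posetal reflection $L(\Reach_Q)$ together with the fact that this reflection is the identity on objects. First I would recall that, by the construction in \eqref{eq:comporeach}, the functor $L\colon \Reach_Q \to \cR(Q)$ sends a vertex $v$ to its equivalence class $[v]$ and is surjective on objects; moreover two objects $v,w$ become isomorphic in $\Reach_Q$ precisely when $v\simeq w$, and since $\Reach_Q$ is thin (Lemma~\ref{lem:thin}), such an isomorphism is unique. The key observation is that any functor $F\colon \Reach_Q \to P$ into a poset $P$ must send isomorphic objects to isomorphic objects, and in a poset the only isomorphisms are identities; hence $F(v) = F(w)$ whenever $v \simeq w$. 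This is exactly what is needed to factor $F$ through the quotient.

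Concretely, I would \textbf{define} $G\colon \cR(Q) \to P$ on objects by $G([v]) \coloneqq F(v)$; this is well-defined by the previous paragraph. For morphisms, recall that $\cR(Q)$ is a poset, so there is a (unique) morphism $[v] \to [w]$ exactly when $[v] \le [w]$, i.e.\ when there is a path in $Q$ from some representative of $[v]$ to some representative of $[w]$; equivalently, when there is a (necessarily unique) morphism $v \to w$ in $\Reach_Q$. In that case I set $G$ of that morphism to be $F$ applied to the corresponding morphism $v \to w$ of $\Reach_Q$. One then checks $G$ is functorial: it preserves identities by construction, and it preserves composition because $P$ is thin (any two parallel morphisms agree, so $G(g)\circ G(f)$ and $G(g\circ f)$ are forced to coincide once domains and codomains match, which they do by the object-level definition). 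Commutativity of the triangle, $G \circ L = F$, is immediate on objects ($G(L(v)) = G([v]) = F(v)$) and on morphisms (both sides send the unique morphism $v\to w$ in $\Reach_Q$ to the same morphism $F(v)\to F(w)$, using thinness of $P$).

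For \textbf{uniqueness}, suppose $G'\colon \cR(Q)\to P$ also satisfies $G'\circ L = F$. Since $L$ is surjective on objects, $G'([v]) = G'(L(v)) = F(v) = G([v])$ for every vertex $v$, so $G$ and $G'$ agree on objects; and since both $\cR(Q)$ and $P$ are thin, a functor between them is completely determined by its action on objects, whence $G' = G$. I expect the main (though still minor) obstacle to be bookkeeping the well-definedness of $G$ on objects — that is, making the passage "$F$ kills the isomorphisms of $\Reach_Q$ because $P$ is a poset" fully rigorous — together with being careful that the diagram in the statement is interpreted correctly (the leftmost arrow $Q \mapsto \Reach_Q$ is not a morphism in any of the categories involved but merely indicates the assignment, so the actual content is the commutativity of the triangle with vertices $\Reach_Q$, $\cR(Q)$, $P$). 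Everything else reduces to the standard fact that the posetal reflection $L\colon \Preord \to \Poset$ is a left adjoint to the inclusion, applied to the preorder underlying $\Reach_Q$; indeed one could alternatively phrase the whole proof as an instance of that adjunction, but the hands-on argument above is self-contained.
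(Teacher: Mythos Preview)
Your argument is correct. The paper's own proof is a single sentence: it simply says that the statement follows directly from the properties of the reflection~$L$, relying on the discussion preceding the proposition (in particular \cite[Proposition~4.22]{concrcat}) which already packages the universal property of the posetal reflection.

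So the difference is one of packaging rather than content: the paper treats Proposition~\ref{prop:condensuni} as a formal consequence of the fact that $\Poset \hookrightarrow \Preord$ is reflective and $L$ is the reflector, whereas you unwind that universal property by hand for the specific preorder $\Reach_Q$. Your route has the advantage of being self-contained and making visible exactly where the hypotheses are used (antisymmetry of $P$ forces $F$ to be constant on $\simeq$-classes, thinness handles morphisms and uniqueness); the paper's route is shorter but presupposes the reader has internalised the abstract reflection machinery. You even note this equivalence yourself in your final paragraph, so you have in fact identified the paper's argument as the alternative.
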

\begin{proof}
    This follows directly from the properties of the reflection $L$.
\end{proof}

Consider the  forgetful functor $U\colon \Cat\to\Quiver$. Then, composition with the functor~$\cR$ of Equation~\eqref{eq:comporeach} yields an endofunctor
\begin{equation}\label{eq:reachgraph}
T\coloneqq U\circ \cR\colon\Quiver\to \Quiver
\end{equation}
of the category of (finite) quivers. 

\begin{remark}
    By construction, the quiver $T(Q)$ does not contain non-trivial directed cycles or multiple edges. If one forgets also the loops, the quiver~$T(Q)$ becomes acyclic.
\end{remark}

\begin{proposition}\label{prop:fcond}
    Let $Q$ be a finite quiver. Then $T(Q)$ is isomorphic to the condensation of the transitive closure of $Q$.
\end{proposition}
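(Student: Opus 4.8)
The plan is to unwind the two constructions and check that they produce the same vertex set, the same edge set, and (vacuously, since both are thin) the same composition data. Write $\mathrm{tc}(Q)$ for the transitive closure of $Q$: the quiver on the same vertices as $Q$ with a single edge $v\to w$ whenever there is a path from $v$ to $w$ in $Q$ (including loops $v\to v$ whenever $v$ lies on a directed cycle, and here we keep loops). First I would observe that $Q$ and $\mathrm{tc}(Q)$ have the same reachability relation: there is a path $v\to w$ in $Q$ if and only if there is an edge, hence a path, $v\to w$ in $\mathrm{tc}(Q)$. Consequently the strongly connected components of $\mathrm{tc}(Q)$ are exactly those of $Q$, namely the equivalence classes $[v]$ under $\simeq$.

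Next I would identify the vertex sets. By \eqref{eq:reachgraph} and the description of $\cR(Q)$ following its definition, the vertices of $T(Q)=U\circ\cR(Q)$ are the classes $[v]$. The vertices of $c(\mathrm{tc}(Q))$ are the strongly connected components of $\mathrm{tc}(Q)$, which by the previous paragraph are again the classes $[v]$. So the two quivers share a vertex set, and the candidate isomorphism is the identity on vertices; it remains to match edges.

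For the edges: in $T(Q)$ there is an edge $[v]\to[w]$ precisely when $[v]\leq[w]$ in the poset $\cR(Q)$, i.e.\ when there is a path from some representative of $[v]$ to some representative of $[w]$ in $Q$ — and, since this is a poset viewed as a category, there is at most one such edge, and no loops (the poset has only identity endomorphisms, which $U$ discards, so $T(Q)$ genuinely has no loops). Wait — I should be careful about loops here, because the preceding Remark says $T(Q)$ has no non-trivial directed cycles but may have loops only if one does not forget them; in fact the poset $\cR(Q)$ as a category has only identity morphisms as endomorphisms, and $U$ forgets identities, so $T(Q)$ has no loops at all. Meanwhile $c(\mathrm{tc}(Q))$: by definition the condensation has an edge $X\to Y$ iff there is an edge $x\to y$ in $\mathrm{tc}(Q)$ with $x\in X$, $y\in Y$; and condensation produces no multiple edges, and the condensation of a directed cycle is a vertex with a loop — but here I need to recall whether the condensation construction as defined in the paper keeps that loop. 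Re-reading the definition of condensation in the excerpt: "the condensation of a directed cycle is the quiver with one vertex and a loop", so $c(\mathrm{tc}(Q))$ does have a loop at each component that is non-trivial. This is a genuine discrepancy with $T(Q)$, which has no loops. Therefore the cleanest fix is to read the statement as an isomorphism after discarding loops, or to note the paper's convention (as in the Remark, "if one forgets also the loops"); I would add one sentence flagging that the identification is up to these loops, consistently with Proposition~\ref{prop:fcond}'s companion remark. Modulo this, an edge $[v]\to[w]$ with $[v]\neq[w]$ exists in $c(\mathrm{tc}(Q))$ iff some $v'\in[v]$ has an edge $v'\to w'$ in $\mathrm{tc}(Q)$ with $w'\in[w]$ iff there is a path from $v'$ to $w'$ in $Q$ iff $[v]\leq[w]$ — exactly the edge condition in $T(Q)$; and both quivers have at most one edge between distinct vertices, so the identity-on-vertices map is the desired isomorphism.

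The main obstacle is precisely the bookkeeping around loops and the "$\leq$ vs.\ $<$" distinction: one must be honest that $T(Q)$ has no loops while both $\mathrm{tc}(Q)$ and its condensation carry loops on cyclic parts, so the isomorphism holds literally only once loops are forgotten (equivalently, on the simple-quiver quotients), matching the convention in the Remark preceding the statement. Everything else — that reachability, and hence strong components, is unchanged by transitive closure; that condensation kills multiplicities; that a poset-as-quiver has no multiple edges — is immediate from the definitions recalled in Section~2, so I would keep those verifications to a line each and devote the one substantive sentence to the loop convention.
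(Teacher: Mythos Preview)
Your argument follows the same route as the paper's proof: identify the vertex sets as the $\simeq$-classes, then match edges between distinct classes via reachability in $Q$. That part is fine and essentially identical to the paper.

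The one substantive slip is your treatment of $U$, which causes the loop ``discrepancy'' you worry about. The forgetful functor $U\colon\Cat\to\Quiver$ in the paper is described as ``forgetting \emph{which} arrows are the identities and which the compositions'' --- it forgets the markings, not the arrows themselves. In particular the identity morphisms of the poset $\cR(Q)$ survive as loops in $T(Q)=U(\cR(Q))$; this is exactly why the Remark just before the Proposition says that $T(Q)$ becomes acyclic only \emph{after} forgetting the loops. Likewise, the paper identifies the transitive closure of $Q$ with the underlying quiver of $\Reach_Q$, which carries a loop at \emph{every} vertex (from the identities $1_v$), not only at vertices on directed cycles. Hence each strongly connected component of $\mathrm{tc}(Q)$ contains a loop, and the condensation $c(\mathrm{tc}(Q))$ also has a loop at every vertex. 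So both $T(Q)$ and $c(\mathrm{tc}(Q))$ have a loop at each vertex and agree on edges between distinct vertices; the isomorphism is literal, and no ``up to forgetting loops'' caveat is needed. Once you correct the reading of $U$, your proof and the paper's coincide.
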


\begin{proof}
    The quiver $T(Q)$ is, by construction, obtained by first taking category $\Reach_Q$ of $Q$; this  can be identified with the transitive closure of $Q$. Now, each strongly connected component of~$Q$  yields a strongly connected component in the transitive closure. Such a component~$H$ is represented by a single vertex~$[h]$ in $T(Q)$. All edges connecting two strongly connected components $H$ and $H'$, say directed from $h$ in $H$ to $h'$ in $H'$, are sent to the edge $([h],[h'])$ in $T(Q)$. Hence, the vertices of $T(Q)$ are the strongly connected components of~$Q$, and there is an edge $([h],[h'])$ in $T(Q)$ if and only if there is an edge $(h,h')$ in the transitive closure of $Q$, with $h$ and $h'$ not strongly connected. This is enough to show that $T(Q)$ is isomorphic to the condensation  of the transitive closure of $Q$.
\end{proof}

As a consequence, if the quiver $Q$ is isomorphic to its transitive closure (e.g.\ when it is alternating), then $T(Q)$ is precisely the condensation of $Q$. 

\begin{example}
    Even though the condensation \(c(Q)\) and taking the underlying quiver \(T(Q)\) of the reachability poset \(\cR(Q)\) are very similar operations, they do not give the same result; the former is a quiver operation, while the latter factors through categories requiring the transitive closure in Proposition \ref{prop:fcond}. This is illustrated by the two examples below (omitting self-loops).
    \begin{center}
    \begin{tikzpicture}
    \tikzstyle{point}=[circle,thick,draw=black,fill=black,inner sep=0pt,minimum width=2pt,minimum height=2pt]
    \tikzstyle{arc}=[shorten >= 3pt,shorten <= 3pt,->, thick, red]
    \node[point] (0) at  (0,0) {};
    \node[point] (1) at  (2,1.5) {};
    \node[point] (2) at  (4,0) {};
	
    \draw[arc] (0) to (1);
    \draw[arc] (0) to (2);
    \draw[arc] (1) to (2);
	
    \node at (0.2,1) {\(Q\)};

    \node[point] (0') at  (6,0) {};
    \node[point] (1') at  (8,1.5) {};
    \node[point] (2') at  (10,0) {};
	
    \draw[arc] (0') to (1');
    \draw[arc] (1') to (2');
	
    \node at (6.2,1) {\(Q'\)};

    \node at (2,-0.5) {\(c(Q) = T(Q)\)};
    \node at (8,-0.5) {\(c(Q') \ne T(Q')\)};
    \end{tikzpicture}
    \end{center}
\end{example}
    
\section{Applications}\label{sec:applications}

We show some applications of reachability categories to two main fields: finite-dimensional algebras and persistent homology. In particular, we show that commuting algebras are Morita equivalent to incidence algebras of reachability posets, and we conclude with the construction of a persistent Hochschild homology pipeline for quivers.

\subsection{Commuting algebras}\label{sec:commuting_algebras}
For a finite quiver $Q$ and $\mathbb{K}$ a field, let $\mathcal{C}(Q)\coloneqq \mathbb{K}Q/C$ be the \emph{commuting algebra} of $Q$ introduced in~\cite{green2023commuting}, i.e.~the path algebra $\mathbb{K}Q$ of $Q$ modulo its parallel ideal \(C\). We have the following characterisation of category algebras of reachability categories:

\begin{lemma}\label{lem:commutingalgiso}
The category algebra of $\Reach_Q$ is isomorphic to the commuting algebra $\mathcal{C}(Q)$.
\end{lemma}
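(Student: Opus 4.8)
The plan is to compare both algebras as quotients of the path algebra $\mathbb{K}Q$ of $Q$. Recall from Example~\ref{ex:path algebras} that $\mathbb{K}Q = R\Path_Q$ is the category algebra of the path category; here $R = \mathbb{K}$. By Remark~\ref{rem:reachascongr}, there is an identity-on-objects functor $F\colon \Path_Q \to \Reach_Q$ which collapses all paths between any two fixed vertices to the single morphism recording their existence. This functor is surjective on morphisms, so it induces a surjective $\mathbb{K}$-algebra homomorphism $F_*\colon \mathbb{K}\Path_Q \to \mathbb{K}\Reach_Q$, and hence $\mathbb{K}\Reach_Q \cong \mathbb{K}\Path_Q/\ker(F_*)$. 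It therefore suffices to identify $\ker(F_*)$ with the parallel ideal $C$.

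First I would describe $\ker(F_*)$ explicitly. Since $F$ is the identity on objects and sends a path $\gamma\colon v\to w$ to the unique morphism $\bar{\gamma}\colon v\to w$ of $\Reach_Q$ (when $v\neq w$), a basis element $\gamma$ of $\mathbb{K}\Path_Q$ maps to a basis element of $\mathbb{K}\Reach_Q$, and two basis elements $\gamma, \gamma'$ have the same image if and only if they are parallel, i.e.\ share source and target. (Directed cycles at a vertex $v$ map to $1_v$, the identity at $v$ in $\Reach_Q$, which is consistent with treating the trivial path as a parallel path of length zero.) From this, a spanning set for $\ker(F_*)$ is the set of all differences $\gamma - \gamma'$ of parallel paths, together with differences $\delta - 1_v$ for $\delta$ a directed cycle at $v$; this is exactly a generating set for the parallel ideal $C$ as defined in the text (the two-sided ideal generated by all differences of paths with the same source and target, with the trivial path counting as a length-zero path). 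Conversely, every element of $C$ lies in $\ker(F_*)$ because $F_*$ identifies parallel paths. Hence $\ker(F_*) = C$ and $\mathbb{K}\Reach_Q \cong \mathbb{K}Q/C = \mathcal{C}(Q)$.

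I would also record that this map respects units: if $Q$ has finitely many vertices, $\mathbb{K}\Reach_Q$ is unital with unit $\sum_{v} 1_v$, which is the image of the unit $\sum_v 1_v$ of $\mathbb{K}\Path_Q$, so the isomorphism is one of unital algebras.

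The main obstacle, and the only real subtlety, is the bookkeeping at the diagonal: in $\Reach_Q$ the hom-set $\Reach_Q(v,v)$ is declared to be just the identity, so any directed cycle at $v$ is sent to $1_v$, and one must check that this behaviour is precisely what the parallel ideal $C$ encodes (i.e.\ that $C$ contains $\delta - 1_v$ for every cycle $\delta$ at $v$, which holds since the trivial path of length zero is a path from $v$ to $v$). Once this is handled, matching generators of $\ker(F_*)$ with generators of $C$ is routine. One should also double-check the edge cases of loops and multiple parallel edges, but these are absorbed into the same description of parallel paths.
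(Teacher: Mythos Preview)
Your proposal is correct and follows essentially the same route as the paper: both use the identity-on-objects functor $F\colon \Path_Q \to \Reach_Q$ from Remark~\ref{rem:reachascongr} to obtain a surjective $\mathbb{K}$-linear (indeed algebra) map $\mathbb{K}\Path_Q \to \mathbb{K}\Reach_Q$ and then identify its kernel with the parallel ideal. Your treatment is somewhat more explicit than the paper's about directed cycles being sent to identities and about the unit, but the underlying argument is the same.
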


\begin{proof}
By Example~\ref{ex:path algebras}, the category algebra of $\Path_Q$ is the  path algebra $\mathbb{K}Q$ of $Q$. The commuting algebra $\mathcal{C}(Q)$ is obtained from~$\mathbb{K}Q$ by taking the quotient with respect to the parallel ideal generated by all differences of finite directed paths in $Q$ with same source and target -- cf.~\cite{green2023commuting}. Consider the linear map 
\[
\mathbb{K}\Path_Q\longrightarrow \mathbb{K}\Reach_Q
\]
of vector spaces, induced by the functor \(F\colon \Path_{Q} \ra \Reach_Q=\Path_{Q} /\!\! \sim\) of Remark~\ref{rem:reachascongr}. The kernel of this linear map is precisely the vector subspace generated by differences of paths in $Q$ with same source and target, i.e.\ the parallel ideal of $\mathbb{K}Q$. The composition of paths is preserved, hence we have an algebra isomorphism between the category algebra of the reachability category and the commuting algebra.
\end{proof}

Recall that two unital rings are said to be Morita equivalent if and only if their categories of left (or right) modules are equivalent, cf.\ \cite[Chapter~6]{Smith1975FWA}. Then, we  recover  one of the  results of \cite{green2023commuting} in categorical setting:

\begin{theorem}\label{thm:enhance}
    Let $Q$ be a finite quiver and $\mathbb{K}$ a field. Then, the commuting algebra $\mathcal{C}(Q)$ is Morita equivalent to the incidence algebra of $\cR(Q)$.
\end{theorem}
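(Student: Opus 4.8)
The plan is to reduce the statement to two facts already available: first, by Lemma~\ref{lem:commutingalgiso}, the commuting algebra $\mathcal{C}(Q)$ is isomorphic to the category algebra $\mathbb{K}\Reach_Q$, so it suffices to prove that $\mathbb{K}\Reach_Q$ is Morita equivalent to the incidence algebra of $\cR(Q)$. Second, the reachability poset $\cR(Q)$ was constructed (see Equation~\eqref{eq:comporeach}) precisely as the posetal reflection of the thin category $\Reach_Q$, and the reflection $\Reach_Q \to \cR(Q)$ is an equivalence of categories onto a skeleton: identifying the vertices $v \simeq w$ whenever there are paths in both directions is exactly the passage to the skeleton of the thin category $\Reach_Q$. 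So the whole theorem comes down to the general principle that equivalent categories have Morita equivalent category algebras, combined with the identification of $\mathbb{K}\,\mathbf{P}$ with the incidence algebra of $P$ for a poset $P$.

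Concretely, here are the steps I would carry out, in order. (1) Invoke Lemma~\ref{lem:commutingalgiso} to replace $\mathcal{C}(Q)$ by $\mathbb{K}\Reach_Q$. (2) Recall from Section~\ref{sec:reach} (the discussion of skeletons and of the functor $L$ in Equation~\eqref{eq:comporeach}) that the canonical functor $L\colon \Reach_Q \to \cR(Q)$ is an equivalence of categories: it is the posetal reflection, and since $\Reach_Q$ is thin, passing to the quotient by the symmetric part of the preorder is the same as passing to a skeletal subcategory, which is an equivalence by \cite[Proposition~2.6.4]{richter}. (3) Apply the general fact that if $F\colon \C \to \D$ is an equivalence of small categories with finitely many objects (up to isomorphism), then the category algebras $R\C$ and $R\D$ are Morita equivalent. (4) Identify $\mathbb{K}\,\cR(Q)$ with the incidence algebra of the poset $\cR(Q)$ via the Example on incidence algebras as category algebras. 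Chaining these, $\mathcal{C}(Q) \cong \mathbb{K}\Reach_Q$ is Morita equivalent to $\mathbb{K}\,\cR(Q)$, which is the incidence algebra of $\cR(Q)$.

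The main obstacle is step~(3): the statement that an equivalence of categories induces a Morita equivalence of category algebras is true but is \emph{not} literally the statement that an algebra isomorphism is induced — an equivalence need not be an isomorphism of categories (indeed $\Reach_Q \to \cR(Q)$ typically collapses several objects), so $\mathbb{K}\Reach_Q$ and $\mathbb{K}\,\cR(Q)$ are in general \emph{not} isomorphic, only Morita equivalent. I would handle this by making the equivalence explicit: choose a section $s\colon \cR(Q) \to \Reach_Q$ of $L$ picking one vertex in each equivalence class (this is the inclusion of a skeleton), so that $L \circ s = \mathrm{id}$ and $s \circ L$ is naturally isomorphic to $\mathrm{id}_{\Reach_Q}$. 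Then the idempotent $e = \sum_{[v]} 1_{s([v])} \in \mathbb{K}\Reach_Q$ is full (every object of $\Reach_Q$ is isomorphic in $\Reach_Q$ to one of the chosen representatives, so the two-sided ideal generated by $e$ is everything), and $e\,(\mathbb{K}\Reach_Q)\,e \cong \mathbb{K}\,\cR(Q)$; a full idempotent yields a Morita equivalence $\mathbb{K}\Reach_Q \sim e\,(\mathbb{K}\Reach_Q)\,e$ by the standard corner-ring argument (cf.\ \cite[Chapter~6]{Smith1975FWA}). The remaining steps are routine bookkeeping with the definitions of Section~\ref{sec:reach}.
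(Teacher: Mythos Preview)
Your proposal is correct and follows essentially the same route as the paper: identify $\mathcal{C}(Q)$ with $\mathbb{K}\Reach_Q$ via Lemma~\ref{lem:commutingalgiso}, use the equivalence $\Reach_Q \simeq \cR(Q)$ to a skeleton, deduce Morita equivalence of the category algebras, and identify $\mathbb{K}\cR(Q)$ with the incidence algebra. The paper handles your step~(3) by citing \cite[Proposition~2.2.4]{Xu_reps_of_cats} rather than spelling out the full-idempotent argument, but your explicit corner-ring justification is a valid (and somewhat more self-contained) substitute.
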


\begin{proof}
    The category $\Reach_Q$ is equivalent to the poset $\cR(Q)$, seen as a category. As $Q$ is finite,  the associated category algebras are Morita equivalent by \cite[Proposition~2.2.4]{Xu_reps_of_cats}. By Lemma~\ref{lem:commutingalgiso} the category algebra of $\Reach_Q$ is isomorphic to the commuting algebra of $Q$, these are also Morita equivalent. Hence, the commuting algebra of $Q$ is Morita equivalent to the category algebra of the reachability poset~$\cR(Q)$, which is an incidence algebra. 
\end{proof}

If we restrict to the category $\Quiver_0$ of finite acyclic quivers, we get a functor
\begin{equation}
    \label{eq:quivalg}
    \mathcal{I}\coloneqq I\circ \cR\colon\Quiver_0 \to \mathbb{K}\text{-}\mathbf{Alg}
\end{equation}
which associates to a quiver $Q$ the incidence algebra of the reachability poset $\cR(Q)$. Then, Theorem~\ref{thm:enhance} says that the diagram
 \begin{center}
 \begin{tikzcd}
\Quiver_0\arrow[dr,"\mathcal{C}"'] \arrow[r, "\cR"] & \Poset \arrow[d, "I"]\\
  & \mathbb{K}\text{-}\mathbf{Alg}
 \end{tikzcd}    
 \end{center}
 is commutative up to Morita equivalence.
 
By \cite[Theorem~1]{stanley}, if the incidence algebras of two locally finite posets $P$ and $Q$ are isomorphic, as $\mathbb{K}$-algebras, then also $P$ and $Q$ are isomorphic, as posets. Note that the extension to the whole category of quivers does not yield the same result as in \cite[Theorem~1]{stanley}. However, we can infer the following: 

\begin{corollary}\label{cor:incidence_algebras_iso}
Let $\mathbb{K}$ be a field. If the commuting algebras of the  finite posets $P$ and $Q$ are isomorphic, as $\mathbb{K}$-algebras, then the reachability categories $\Reach_P$ and $\Reach_Q$ are isomorphic.   
\end{corollary}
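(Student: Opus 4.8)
The plan is to reduce the statement to Stanley's reconstruction theorem for incidence algebras, using Lemma~\ref{lem:commutingalgiso} as the bridge. First I would observe that when the quiver under consideration is (the underlying quiver of) a poset $P$, its reachability category is nothing but $P$ again: the underlying quiver $U(P)$ has exactly one edge $p\to q$ whenever $p\le q$, a path from $p$ to $q$ exists precisely when $p\le q$ (the relation being already reflexive and transitive), and $\Reach$ is thin, so $\Reach_P\cong P$ as categories; equivalently $\cR(P)\cong P$. Consequently, by Lemma~\ref{lem:commutingalgiso}, the commuting algebra $\mathcal{C}(P)$ is isomorphic to the category algebra $\mathbb{K}\Reach_P\cong \mathbb{K}P$, which is precisely the incidence algebra $I(P)$ of $P$; in the same way $\mathcal{C}(Q)\cong I(Q)$.

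Next, assuming $\mathcal{C}(P)\cong\mathcal{C}(Q)$ as $\mathbb{K}$-algebras, I would combine the two isomorphisms above to obtain an isomorphism $I(P)\cong I(Q)$ of incidence algebras. Since $P$ and $Q$ are finite, they are in particular locally finite, so Stanley's theorem \cite[Theorem~1]{stanley} applies and yields an isomorphism of posets $P\cong Q$. Transporting this back through the identifications $\Reach_P\cong P$ and $\Reach_Q\cong Q$ produces the desired isomorphism $\Reach_P\cong\Reach_Q$, completing the argument.

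The proof is short because the substantive work is packaged in the cited results, so I do not expect a genuine obstacle. The only two points that need care are: (i) checking that the reachability construction does not alter a poset — that no multiple edges or extra morphisms appear when forming $\Reach_{U(P)}$ — which is exactly where antisymmetry and transitivity (together with finiteness) are used and is what makes the category algebra \emph{literally} an incidence algebra rather than merely Morita equivalent to one; and (ii) making sure the comparison in Lemma~\ref{lem:commutingalgiso} is an isomorphism of algebras, not just a Morita equivalence, so that the hypothesis $\mathcal{C}(P)\cong\mathcal{C}(Q)$ indeed transports to a genuine algebra isomorphism $I(P)\cong I(Q)$ to which Stanley's theorem can be applied.
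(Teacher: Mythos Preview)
Your argument is correct and follows essentially the same route as the paper: identify $\Reach_P$ with $P$ (and likewise for $Q$), use Lemma~\ref{lem:commutingalgiso} to recognise the commuting algebras as the incidence algebras $I(P)$ and $I(Q)$, and then invoke Stanley's theorem to conclude $P\cong Q$, hence $\Reach_P\cong\Reach_Q$. The only difference is presentational: you spell out more explicitly why $\Reach_P\cong P$ and why the algebra isomorphism (rather than mere Morita equivalence) is what feeds into Stanley's result.
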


 \begin{proof}
 Let $P$ and $Q$ be finite posets. Then, seen as as quivers, they generate the reachability categories $\Reach_P$ and $\Reach_Q$, which are still posets. In fact, they agree with the reachability posets~$\cR(P)$ and $\cR(Q)$. By assumption, the associated category algebras are isomorphic, and such isomorphism is reflected in an isomorphism between the incidence algebras of $\Reach_P$ and $\Reach_Q$. By  \cite[Theorem~1]{stanley}, the posets, hence the reachability categories, are also isomorphic.
 \end{proof}

We  provide a complete characterisation of quivers with Morita equivalent commuting algebras.

\begin{theorem}\label{thm:stley}
    Let $Q, Q'$ be finite quivers. Then, the  commuting algebras $\mathcal{C}(Q)$ and $\mathcal{C}(Q')$ are Morita equivalent if and only if the reachability posets $\cR(Q)$ and $\cR(Q')$ are isomorphic.
\end{theorem}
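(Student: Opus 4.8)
The plan is to prove both implications, using Theorem~\ref{thm:enhance} as the bridge between commuting algebras and incidence algebras of reachability posets, together with Stanley's rigidity theorem~\cite[Theorem~1]{stanley}.

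\textbf{The ``if'' direction.} Suppose $\cR(Q)\cong \cR(Q')$ as posets. Then their associated categories are isomorphic, hence their incidence algebras $I(\cR(Q))$ and $I(\cR(Q'))$ are isomorphic as $\mathbb{K}$-algebras, in particular Morita equivalent. By Theorem~\ref{thm:enhance}, $\mathcal{C}(Q)$ is Morita equivalent to $I(\cR(Q))$ and $\mathcal{C}(Q')$ is Morita equivalent to $I(\cR(Q'))$. Since Morita equivalence is an equivalence relation, chaining these gives that $\mathcal{C}(Q)$ and $\mathcal{C}(Q')$ are Morita equivalent. This direction is essentially formal.

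\textbf{The ``only if'' direction.} Suppose $\mathcal{C}(Q)$ and $\mathcal{C}(Q')$ are Morita equivalent. Again by Theorem~\ref{thm:enhance}, this means $I(\cR(Q))$ and $I(\cR(Q'))$ are Morita equivalent. Now the key point is that $\cR(Q)$ and $\cR(Q')$ are \emph{finite} posets. A standard fact is that the incidence algebra of a finite poset is a basic finite-dimensional $\mathbb{K}$-algebra (its quotient by the radical is a product of copies of $\mathbb{K}$, one per element of the poset), and two basic finite-dimensional algebras are Morita equivalent if and only if they are isomorphic. Hence $I(\cR(Q))\cong I(\cR(Q'))$ as $\mathbb{K}$-algebras. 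Finally, finite posets are in particular locally finite, so Stanley's theorem~\cite[Theorem~1]{stanley} applies and yields $\cR(Q)\cong \cR(Q')$ as posets.

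\textbf{Main obstacle.} The delicate step is the passage from Morita equivalence of the incidence algebras to an actual isomorphism. For general algebras Morita equivalence is strictly weaker than isomorphism; the argument relies crucially on incidence algebras of finite posets being basic (equivalently, on the idempotents $1_{[v]}$ being primitive and pairwise non-isomorphic in the module category). One must either cite this fact about incidence algebras or give the short argument: $I(P)/\mathrm{rad}\,I(P)\cong \mathbb{K}^{|P|}$, so $I(P)$ is basic, and a Morita equivalence between basic algebras over the same field restricts to an isomorphism. Once this reduction to algebra isomorphism is in place, Stanley's theorem finishes the proof. I would also remark that, in contrast to Corollary~\ref{cor:incidence_algebras_iso}, no acyclicity hypothesis is needed here because $\cR(Q)$ is a poset for \emph{every} quiver $Q$, so Theorem~\ref{thm:enhance} applies in full generality.
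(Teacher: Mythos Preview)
Your proof is correct and follows essentially the same route as the paper's own argument: reduce to Morita equivalence of the incidence algebras of the reachability posets (via Theorem~\ref{thm:enhance}), upgrade Morita equivalence to isomorphism using that these algebras are basic, and then invoke Stanley's theorem. The only cosmetic difference is that the paper phrases the upgrade step as ``category algebras of acyclic categories are Morita equivalent if and only if they are isomorphic'', whereas you spell out the reason (incidence algebras of finite posets are basic); your version is in fact more transparent on this point.
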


\begin{proof}
    By Lemma~\ref{lem:commutingalgiso}, the commuting algebras of the quivers $Q$ and $Q'$ are obtained by taking the category algebras of the reachability categories $\Reach_Q$ and $\Reach_{Q'}$. Thererefore, $\mathcal{C}(Q)$ and $\mathcal{C}(Q')$ are Morita equivalent if and only if the category algebras $\mathbb{K}\Reach_Q$ and $\mathbb{K}\Reach_{Q'}$ are Morita equivalent. Category algebras of acyclic categories are Morita equivalent if and only if they  are isomorphic, hence  the category algebras $\mathbb{K}\Reach_Q\cong \mathbb{K}\cR(Q)$ and $\mathbb{K}\Reach_{Q'}\cong \mathbb{K}\cR(Q')$ are Morita equivalent if and only if the incidence algebras $\mathbb{K}\cR(Q)$ and $\mathbb{K}\cR(Q')$ are isomorphic. By Stanley's theorem~\cite[Theorem~1]{stanley}, this  happens if and only if the reachability posets $\cR(Q)$ and $\cR(Q')$ are isomorphic.
\end{proof}
   
Recall that the global dimension of a ring $R$ is the  supremum of the set of projective dimensions of all $R$-modules. For a finite quiver $Q$, we denote by $\mathrm{diam}(Q)$ the maximal length across all directed simple paths in $T(Q)$. 

\begin{corollary}\label{cor:gldim}
    Let $Q$ be a finite quiver. Then,
    \[
    \mathrm{gl.dim} \, \mathcal{C}(Q) \leq \mathrm{diam}(Q).
    \]
\end{corollary}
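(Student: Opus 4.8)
The plan is to reduce the statement to a known bound on the global dimension of incidence algebras of finite posets. By Theorem~\ref{thm:enhance}, the commuting algebra $\mathcal{C}(Q)$ is Morita equivalent to the incidence algebra $I(\cR(Q))$, and Morita equivalence preserves global dimension (equivalences of module categories preserve projective dimensions of objects, hence their supremum). So $\mathrm{gl.dim}\,\mathcal{C}(Q) = \mathrm{gl.dim}\,I(\cR(Q))$, and it suffices to bound the right-hand side by $\mathrm{diam}(Q)$.

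Next I would recall the classical fact that for a finite poset $P$, the global dimension of its incidence algebra over a field is finite and is controlled by the combinatorics of chains in $P$ — more precisely, it is bounded above by the length of the longest chain in $P$ (one standard reference being the work on incidence algebras, e.g.\ via the interpretation of $\mathrm{Ext}$ groups of simple modules in terms of the order complex of open intervals, so that $\mathrm{Ext}^n$ between simples corresponding to $p < q$ is computed by the reduced homology of the order complex of the open interval $(p,q)$, which vanishes once $n$ exceeds the length of the longest chain from $p$ to $q$). Concretely, if the longest chain in $\cR(Q)$ has $\ell+1$ elements (length $\ell$), then $\mathrm{gl.dim}\,I(\cR(Q)) \le \ell$.

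It then remains to identify the length of the longest chain in the poset $\cR(Q)$ with $\mathrm{diam}(Q)$. By Proposition~\ref{prop:fcond}, the quiver $T(Q) = U\circ\cR(Q)$ is (isomorphic to) the condensation of the transitive closure of $Q$; its underlying directed graph, after forgetting loops, is exactly the Hasse-type graph whose vertices are the equivalence classes $[v]$ with an edge $[v]\to[w]$ whenever $[v] \le [w]$ and $[v]\ne[w]$. A directed simple path in $T(Q)$ is precisely a chain in the poset $\cR(Q)$, and $\mathrm{diam}(Q)$ was defined as the maximal length over all directed simple paths in $T(Q)$. Hence the length of the longest chain in $\cR(Q)$ equals $\mathrm{diam}(Q)$, and combining the three steps gives $\mathrm{gl.dim}\,\mathcal{C}(Q) \le \mathrm{diam}(Q)$.

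The main obstacle I anticipate is citing (or proving) the precise bound $\mathrm{gl.dim}\,I(P) \le \ell(P)$ in the generality needed; one must be a little careful about whether the longest-chain length is counted in vertices or edges, and about the edge case where $\cR(Q)$ is an antichain (so $\mathrm{diam}(Q)=0$ and $I(\cR(Q))$ is semisimple, giving global dimension $0$, consistent with the bound). Assuming the standard homological description of $\mathrm{Ext}$ over incidence algebras — which is classical — the argument is otherwise a direct concatenation of Theorem~\ref{thm:enhance}, Morita-invariance of global dimension, and Proposition~\ref{prop:fcond}.
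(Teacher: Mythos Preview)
Your argument is correct and structurally parallel to the paper's, but it factors through a slightly different intermediate object. The paper does not pass to the incidence algebra via Theorem~\ref{thm:enhance}; instead it stays with $\mathbb{K}\Reach_Q$ itself, observes (Corollary~\ref{cor:EI}) that $\Reach_Q$ is an $EI$-category with trivial automorphism groups, and then invokes Xu's bound \cite[Theorem~5.3.1]{Xu_thesis} on the global dimension of an $EI$-category algebra by the maximal length $\ell(\Reach_Q)$ of a chain of non-isomorphisms, together with Lemma~\ref{lem:commutingalgiso}. Your route---Morita equivalence to $I(\cR(Q))$ plus the classical chain-length bound for incidence algebras---is the skeletal version of the same idea, and arguably more elementary since the incidence-algebra bound is older and more widely known than Xu's $EI$-category generalisation. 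The identification of the longest chain in $\cR(Q)$ with $\mathrm{diam}(Q)$ is immediate from the definition $T(Q)=U\circ\cR(Q)$, so you do not actually need Proposition~\ref{prop:fcond} for that step.
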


\begin{proof}
The reachability category  of a finite quiver is an \(EI\)-category by Corollary~\ref{cor:EI}. Moreover, for each object $x$ of~$\Reach_Q$, the automorphism group of $x$ is trivial. Then, by \cite[Theorem~5.3.1]{Xu_thesis}, we have 
    \[
    \mathrm{gl.dim} \, \mathbb{K}\Reach_Q\leq \ell(\Reach_Q)
    \]
    where $\ell(\Reach_Q)$ is the maximal length of chains of non-isomorphisms in the poset~$\cR(Q)$. Note that each such chain is in bijection with a directed path in $T(Q)$, hence $\ell(\Reach_Q)=\mathrm{diam}(Q)$. The statement now follows from Lemma~\ref{lem:commutingalgiso}.
\end{proof}

Let \(\mathbf{B}_{1,1}\) be the poset generated by the quasi-bigon \(B_{1,1}\), i.e.~the directed square on four vertices. 
\begin{proposition}\label{prop:gl_dim_no_B11}
    Let \(Q\) be a finite quiver with at least one edge. Then
    \[
    \mathrm{gl.dim} \, \mathbb{K}\Reach_Q = \mathrm{gl.dim} \, \mathbb{K}\cR(Q) = 1
    \]
    if and only if the reachability poset \(\cR(Q)\) does not contain \(\mathbf{B}_{1,1}\) as a subposet.
\end{proposition}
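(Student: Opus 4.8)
The plan is to reduce the statement to a question about incidence algebras of posets and invoke a classical criterion for when an incidence algebra is hereditary. First I would observe that, by Lemma~\ref{lem:commutingalgiso} together with the equivalence $\Reach_Q \simeq \cR(Q)$ (which preserves category algebras up to Morita equivalence, and hence global dimension), we have $\mathrm{gl.dim}\,\mathbb{K}\Reach_Q = \mathrm{gl.dim}\,\mathbb{K}\cR(Q)$, and this common value is the global dimension of the incidence algebra $I(\cR(Q))$. So the claim becomes: for a finite quiver $Q$ with at least one edge, the incidence algebra of the reachability poset $\cR(Q)$ has global dimension exactly $1$ if and only if $\cR(Q)$ does not contain $\mathbf{B}_{1,1}$ as a subposet.

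Next I would handle the two easy bounds. Since $Q$ has at least one edge, $\cR(Q)$ has at least one strict relation, so $I(\cR(Q))$ is not semisimple and $\mathrm{gl.dim} \geq 1$. Conversely, by Corollary~\ref{cor:gldim} (or directly from \cite[Theorem~5.3.1]{Xu_thesis}), $\mathrm{gl.dim}\,\mathbb{K}\Reach_Q \leq \mathrm{diam}(Q) = \ell(\cR(Q))$; but more to the point, I want the characterisation of when $\mathrm{gl.dim} = 1$, i.e.\ when $I(\cR(Q))$ is hereditary. By Remark~\ref{rem:pathalgincalg}, the incidence algebra of a finite poset $P$ is hereditary if and only if its path algebra and incidence algebra coincide, i.e.\ if and only if $P$ is a tree (for each $p$, $\{s : s < p\}$ is well-ordered — for finite posets this just says the order ideal below any element is a chain). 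So I must show: $\cR(Q)$ is a tree as a poset $\iff$ $\cR(Q)$ contains no copy of $\mathbf{B}_{1,1}$.

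The forward implication is immediate: $\mathbf{B}_{1,1}$ is the poset $\{x < v, x < w, v < y, w < y\}$, which is not a tree (the element $y$ has $\{s : s<y\} \supseteq \{x,v,w\}$, and $v,w$ are incomparable, so this downset is not a chain), and being a tree is inherited by... — here one must be slightly careful, since "tree" is not literally subposet-hereditary, so instead I would argue contrapositively and directly: if $\cR(Q)$ is \emph{not} a tree, there is some element $p$ whose strict downset $\{s : s < p\}$ is not a chain, hence contains two incomparable elements $a, b$; picking a minimal element $x$ of $\{s : s \leq a,\ s\leq b\}$ — wait, there may be no common lower bound, so instead take $a,b$ incomparable with $a < p$, $b < p$ and choose them minimal with the property of lying strictly below $p$ and being incomparable to each other — then $\{x : x < p\}$ not a chain, and $x$ (a minimal element of the whole downset, or just any element below both? no). The cleanest route: in a finite poset that is not a tree, one can find elements $x < v$, $x < w$, $v < y$, $w < y$ with $v,w$ incomparable — this is a small combinatorial lemma: take $y$ with $\{s<y\}$ not a chain, pick incomparable $v,w < y$ that are \emph{maximal} such, then the downset $\{s : s\leq v\} \cap \{s : s\leq w\}$ is nonempty (it must contain a common lower bound inside the downset of $y$ — actually it need not; two elements below $y$ need not have a common lower bound) — so I'd pick $v, w$ incomparable and minimal among pairs of incomparable elements below $y$, then minimality forces $\{s < v\}$ and $\{s<w\}$ to be chains with a common element... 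I expect this small extraction lemma to be the main obstacle, and I would isolate it as a separate claim: \emph{a finite poset is a tree if and only if it contains no subposet isomorphic to $\mathbf{B}_{1,1} = \{x<v,\ x<w,\ v<y,\ w<y,\ v\not\leq w,\ w\not\leq v\}$}, proved by the minimality argument sketched above together with the observation that the copy of $\mathbf{B}_{1,1}$ need not be an interval but can be taken with $x$ a common lower bound obtained as a minimal element of $\{s : s\leq v,\ s\leq w\}$ once $v,w$ are chosen minimal incomparable below a common upper bound $y$. Combining this lemma with Remark~\ref{rem:pathalgincalg} and the global-dimension translation above finishes the proof.
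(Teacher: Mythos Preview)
Your Morita-equivalence reduction to the incidence algebra of $\cR(Q)$ is fine and matches the paper. The problem is the second half: you try to pass through Remark~\ref{rem:pathalgincalg} and reduce to showing that a finite poset is a tree (in the sense that every strict down-set is a chain) if and only if it contains no copy of $\mathbf{B}_{1,1}$. That equivalence is false. Take the three-element poset $\{v,w,y\}$ with $v<y$, $w<y$, $v\parallel w$: the down-set of $y$ is not a chain, so this poset is not a tree, yet it has only three elements and cannot contain $\mathbf{B}_{1,1}$. Its incidence algebra is the path algebra of the quiver $\bullet\to\bullet\leftarrow\bullet$, which is hereditary. So ``hereditary $\Leftrightarrow$ tree'' already fails here, and the combinatorial extraction lemma you sketch cannot be salvaged: your own parenthetical worry (``two elements below $y$ need not have a common lower bound'') is exactly the obstruction, and no minimality argument manufactures the missing common lower bound $x$.

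The paper sidesteps this entirely by invoking Mitchell's characterisation directly: for a finite poset $P$, the incidence algebra has global dimension at most $1$ precisely when $P$ contains no $\mathbf{B}_{1,1}$ as a subposet (equivalently, when any two comparable elements are joined by a unique saturated chain in the Hasse diagram). If you want a self-contained route, that is the statement to prove---show the relations ideal of the Hasse quiver vanishes exactly under the no-$\mathbf{B}_{1,1}$ hypothesis---rather than detouring through the tree condition. A minor additional issue: your claim that ``$Q$ has at least one edge $\Rightarrow$ $\cR(Q)$ has a strict relation'' is also false (take $Q$ a single loop, or two vertices with reciprocal edges), so the lower bound $\mathrm{gl.dim}\geq 1$ needs its own justification.
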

\begin{proof}
    By  Morita equivalence, \(\mathrm{gl.dim} \, \mathbb{K}\Reach_Q = \mathrm{gl.dim} \, \mathbb{K}\cR(Q)\). Then, the statement follows from \cite[Theorem 4.2]{Mitchell_ordered_sets}.
\end{proof}
Note that if all the quasi-bigons in \(Q\) have diagonals, then by Lemma \ref{lem:strong_qbigon} \(\cR(Q)\) can not contain~\(\mathbf{B}_{1,1}\) as a subposet. Vice versa, if there is a quasi-bigon \(B_{m,n}\) with \(m,n \geq 1\) such that \(x\), \(y\) and a pair of vertices \(v_i\) and \(w_j\) are all in different strongly connected components, then the global dimension of the commuting algebra \(\mathbb{K}\Reach_Q\) is larger than 1. Finally, note that the condition in Proposition \ref{prop:gl_dim_no_B11} does not require \(Q\) to be a tree; for example, the alternating square in Figure \ref{fig:altquiv} does not have \(\mathbf{B}_{1,1}\) as a subposet.

\subsection{Persistent homology of quivers}\label{subsec:homological_stuff}
(Co)homology theories of directed graphs and quivers have become important tools in mathematics and science in general, largely due to the ubiquity of graph data in various research domains, and have gathered momentum in topological data analysis \cite{Reimann_2017,kaul,persistentHH,Riihimaki_simplicial_connectivities}. In topological data analysis one generally associates to a (undirected, directed) graph various types of homology groups; we refer to~\cite{persistentHH} for an excursus of possible homology theories. Among the various options, one can use the categorical framework, which allows to generally attain more refined invariants. Among others, one can associate homology groups to a quiver using the nerve construction. One can also study the homology of the category algebra $R\bC$ for (any) category \(\bC\) associated to a quiver~\(Q\); examples of this sort include Hochschild homology \(\mathrm{HH}\), or cyclic homology \cite{loday}, which are invariants associated to category algebras. 

For a quiver, a natural category to study is its path category. As the category \(\mathbf{Path}_Q\) is $1$-dimensional, its homological invariants  vanish beyond degree~1 (recall Proposition~\ref{prop:hogr=homcat}), and they are readily computable when the quiver, and the associated category, are acyclic. For Hochschild (co)homology in degrees $0$ and~$1$, we can resort to a well known result due to Happel (see also \cite[Proposition~4.4]{redondo}):

\begin{theorem}[\cite{happel}]\label{thmhapp}
	If $Q=(V,E,s,t)$ is a connected quiver without oriented cycles and~$\mathbb{K}$ is an algebraically closed field, then 
	\[
	\dim_{\mathbb{K}} \mathrm{HH}^i(A)=\dim_{\mathbb{K}} \mathrm{HH}_i(A)=
	\begin{cases}
		1 \quad &\text{ if } i=0 \\
		0 \quad &\text{ if } i>1 \\
		1-n +\sum_{e\in E}\dim_{\mathbb{K}} e_{t(e)}A e_{s(e)}  \quad &\text{ if } i=1
	\end{cases}
	\] 
	where $A=\mathbb{K}\Path_Q$ is the path algebra of $Q$,  $n=|V|$ is the number of vertices of $Q$ and $e_{t(e)}A e_{s(e)}$ is the subspace of $A$ generated by all the possible paths from $s(e)$ to $t(e)$ in $Q$.
\end{theorem}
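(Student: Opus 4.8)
The plan is to realise $A=\mathbb{K}Q$ as a finite dimensional hereditary algebra and to compute its Hochschild cohomology $\mathrm{HH}^i(A)=\mathrm{Ext}^i_{A^e}(A,A)$ (and, by the dual procedure, its Hochschild homology $\mathrm{HH}_i(A)=\mathrm{Tor}^{A^e}_i(A,A)$), where $A^e=A\otimes_{\mathbb{K}}A^{\mathrm{op}}$, directly from the standard length‑one projective bimodule resolution of $A$. Acyclicity of $Q$ is used to guarantee that $A$ is finite dimensional and that $e_vAe_v=\mathbb{K}e_v$ for every vertex $v$; the path‑algebra structure is what forces the resolution to have length one.

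The first step is to write down and check exactness of the resolution
\[
0 \longrightarrow \bigoplus_{a\in E} Ae_{t(a)}\otimes_{\mathbb{K}} e_{s(a)}A \xrightarrow{\ \partial\ } \bigoplus_{v\in V} Ae_v\otimes_{\mathbb{K}} e_vA \xrightarrow{\ \mu\ } A \longrightarrow 0 ,
\]
where $\mu$ is induced by the multiplication of $A$ and $\partial$ sends the canonical generator $\epsilon_a=e_{t(a)}\otimes e_{s(a)}$ of the summand indexed by $a$ to $a\otimes e_{s(a)}-e_{t(a)}\otimes a$. Surjectivity of $\mu$ is immediate; the real content is that $\partial$ is injective with image exactly $\ker\mu$. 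For this I would expand an arbitrary element in the path basis, write a path $p=a_r\cdots a_1$ as a product of arrows, and use the telescoping identity $1\otimes p-p\otimes 1=\sum_i\big((a_r\cdots a_{i+1})\otimes(a_i\cdots a_1)-(a_r\cdots a_i)\otimes(a_{i-1}\cdots a_1)\big)$ to show that $\ker\mu$ is spanned by the $\partial(\epsilon_a)$, and a comparison of path bases to see these are linearly independent. This resolution is the classical one for path algebras; it encodes hereditariness and already gives $\mathrm{HH}^i(A)=0$ for $i\geq 2$.

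The second step is to apply $\mathrm{Hom}_{A^e}(-,A)$, using the identification $\mathrm{Hom}_{A^e}(Ae_i\otimes_{\mathbb{K}}e_jA,\,A)\cong e_iAe_j$, which turns the deleted resolution into the two‑term complex
\[
0 \longrightarrow \bigoplus_{v\in V} e_vAe_v \xrightarrow{\ \delta\ } \bigoplus_{a\in E} e_{t(a)}Ae_{s(a)} \longrightarrow 0 , \qquad \delta\big((\lambda_v)_v\big)=\big(\lambda_{s(a)}\,a-\lambda_{t(a)}\,a\big)_a .
\]
Since $Q$ has no oriented cycles, $e_vAe_v=\mathbb{K}e_v$, so the left‑hand term is $\mathbb{K}^n$ with $n=|V|$, while $e_{t(a)}Ae_{s(a)}$ is exactly the span of the paths from $s(a)$ to $t(a)$. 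Now $\mathrm{HH}^0(A)=\ker\delta$ is the set of families $(\lambda_v)$ constant along every arrow, hence constant because $Q$ is connected, so $\dim\mathrm{HH}^0(A)=1$ (equivalently, $\delta$ is the transpose of the signed incidence matrix of the connected graph $Q$, of rank $n-1$). Taking the Euler characteristic of the two‑term complex, $\dim\mathrm{HH}^0(A)-\dim\mathrm{HH}^1(A)=n-\sum_{a\in E}\dim_{\mathbb{K}} e_{t(a)}Ae_{s(a)}$, which together with $\dim\mathrm{HH}^0(A)=1$ rearranges to $\dim\mathrm{HH}^1(A)=1-n+\sum_{a\in E}\dim_{\mathbb{K}} e_{t(a)}Ae_{s(a)}$, as claimed; the homology statement is obtained by the same argument with $A\otimes_{A^e}(-)$ in place of $\mathrm{Hom}_{A^e}(-,A)$ (using $A\otimes_{A^e}(Ae_i\otimes_{\mathbb{K}}e_jA)\cong e_jAe_i$).

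I expect the main obstacle to be the exactness of the bimodule resolution and the careful bookkeeping of the maps $\partial$ and $\delta$ — in particular verifying that $\partial$ is injective with image exactly $\ker\mu$ and that $\delta$ really is the incidence map on the relevant one‑dimensional subspaces $\mathbb{K}a\subseteq e_{t(a)}Ae_{s(a)}$. Once these are in place, the remaining dimension count is the elementary linear algebra of the incidence matrix of a connected graph.
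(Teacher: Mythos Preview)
The paper does not give its own proof of this theorem: it is quoted verbatim as a known result of Happel (with a pointer also to Redondo), and is used only as a computational tool in the persistent Hochschild homology discussion. So there is no in-paper argument to compare against.

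That said, your proposal is correct and is essentially Happel's original proof. The length-one projective $A^e$-resolution you write down is the standard one for path algebras, the identifications $\mathrm{Hom}_{A^e}(Ae_i\otimes_{\mathbb{K}}e_jA,A)\cong e_iAe_j$ and $A\otimes_{A^e}(Ae_i\otimes_{\mathbb{K}}e_jA)\cong e_jAe_i$ are the right ones, and acyclicity of $Q$ is used exactly where you say (so that $e_vAe_v=\mathbb{K}e_v$ and $A$ is finite dimensional). The computation of $\ker\delta$ via connectedness and the Euler-characteristic count for $\dim\mathrm{HH}^1$ are the clean way to finish. The only place to be careful, as you note, is the exactness of the bimodule resolution; the telescoping identity you sketch is the standard argument and goes through without surprises in the path basis.
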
  

Recall that persistent homology is a functor \((\mathbb{R}, \leq) \ra \mathbf{FinVect}\) with values in finite dimensional vector spaces over a field \(\mathbb{K}\). In the topological setting this can be realised by taking the homology of a filtered simplicial complex. In \cite{persistentHH}, the following persistent Hochschild homology pipeline was introduced, where \(\mathrm{HH}\) is computed  on the category algebra of \(\Path(c(Q))\):
\begin{equation}\label{eq:compvar}
    (\mathbb{R}, \leq) \ra \mathbf{Digraph}\xrightarrow{c}\mathbf{Digraph}\xrightarrow{\mathbb{K}-} \mathbb{K}\text{-}\mathbf{Alg}\xrightarrow{\mathrm{HH}}\mathbf{FinVect} \ ;
\end{equation}
note that in this pipeline the condensation \(c\) facilitates the use of Happel's formula. On the other hand, condensation also renders the composition \eqref{eq:compvar} non-functorial. Then, we might wish to retrieve the functoriality using a different category, e.g.~\(\Reach_Q\), instead of the free category~\(\Path_Q\). In fact, recall that equivalent categories with finitely many objects have Morita equivalent category algebras. Then by \cite[Theorem 1.2.7]{loday} we get $\mathrm{HH}_*(\mathbb{K}\Reach_Q)\cong \mathrm{HH}_*(\mathbb{K}\cR(Q))$, and \(\cR(Q)\) (as an acyclic quiver) satisfies the assumptions of Theorem~\ref{thmhapp}. However, the following result sets up an obstacle to the definition of a functorial pipeline involving Hochschild homology of reachability categories.
\begin{proposition}[\cite{Xu_reps_of_cats}, Proposition 2.2.3]\label{prop:inj_functor_alg_homomorphism}
    A functor \(\mu \colon \D \ra \C\) between small categories extends linearly to an algebra homomorphism \(\overline{\mu} \colon \mathbb{K}\D \ra \mathbb{K}\C\) if and only if \(\mu\) is injective on objects of \(\D\).
\end{proposition}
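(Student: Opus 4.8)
The plan is to verify directly that the canonical linear extension $\overline{\mu}$ of $\mu$ — the $\mathbb{K}$-linear map $\mathbb{K}\D\ra\mathbb{K}\C$ determined by $\overline{\mu}(f)=\mu(f)$ on the basis of morphisms of $\D$ — is multiplicative precisely when $\mu$ is injective on objects. Since $\overline{\mu}$ is linear by construction, and since multiplication in both category algebras is bilinear, the map $\overline{\mu}$ is an algebra homomorphism if and only if $\overline{\mu}(f\cdot g)=\overline{\mu}(f)\cdot\overline{\mu}(g)$ for every pair $f,g$ of morphisms of $\D$. So the whole proof reduces to checking this identity on basis pairs, split into the two cases of whether $f$ and $g$ are composable in $\D$.

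First I would treat the composable case, say $s(f)=t(g)$ (using the convention that $f\cdot g=f\circ g$ means ``first $g$, then $f$''). Then $f\cdot g=f\circ g$ in $\mathbb{K}\D$, and functoriality gives $\mu(f\circ g)=\mu(f)\circ\mu(g)$. Moreover $\mu$ sends the composable pair $(f,g)$ to a composable pair, because $s(\mu(f))=\mu(s(f))=\mu(t(g))=t(\mu(g))$, so $\mu(f)\cdot\mu(g)=\mu(f)\circ\mu(g)$ in $\mathbb{K}\C$. Hence $\overline{\mu}(f\cdot g)=\overline{\mu}(f)\cdot\overline{\mu}(g)$ in this case, with no hypothesis on objects required.

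Next I would treat the non-composable case, $s(f)\neq t(g)$, so that $f\cdot g=0$ and $\overline{\mu}(f\cdot g)=0$. Here $\overline{\mu}(f)\cdot\overline{\mu}(g)=\mu(f)\cdot\mu(g)$, which equals $\mu(f)\circ\mu(g)$ when $\mu(s(f))=\mu(t(g))$ and equals $0$ otherwise. Thus multiplicativity on this pair holds if and only if $\mu(s(f))\neq\mu(t(g))$. If $\mu$ is injective on objects, this follows from $s(f)\neq t(g)$, so $\overline{\mu}$ is multiplicative on all pairs, proving the ``if'' direction. For the ``only if'' direction I would argue by contraposition: if $\mu(x)=\mu(y)$ for some objects $x\neq y$ of $\D$, take $f=1_x$ and $g=1_y$. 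These are not composable in $\D$, so $1_x\cdot 1_y=0$, whereas $\overline{\mu}(1_x)\cdot\overline{\mu}(1_y)=1_{\mu(x)}\cdot 1_{\mu(y)}=1_{\mu(x)}\circ 1_{\mu(x)}=1_{\mu(x)}\neq 0$ in $\mathbb{K}\C$; hence $\overline{\mu}$ is not an algebra homomorphism. This completes the equivalence.

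I do not expect a genuine obstacle here: the argument is a short finite case analysis, and the only points needing care are keeping the direction of composition straight (so that ``composable'' means $s(f)=t(g)$) and observing that composability of $\mu(f)$ and $\mu(g)$ in $\mathbb{K}\C$ is detected solely by the equality of objects $\mu(s(f))=\mu(t(g))$ — this is exactly where injectivity of $\mu$ on objects is used, and the failure is always witnessed by a pair of identity morphisms. If, in the finite-object setting, one also wanted $\overline{\mu}$ to be unital, one would in addition need $\mu$ to be bijective on objects, but as stated the proposition only concerns algebra homomorphisms, so this refinement is not needed.
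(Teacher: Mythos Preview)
Your argument is correct and is exactly the standard proof of this fact. Note, however, that the paper does not supply its own proof: the proposition is quoted from \cite{Xu_reps_of_cats} and used as a black box, so there is nothing to compare your approach against within the paper itself. Your case analysis (composable versus non-composable pairs, with the identity-morphism witness for the failure of multiplicativity) is precisely the argument one finds in Xu's thesis, so you have reconstructed the cited proof.
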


Because the functor \(\Reach_Q \ra \cR(Q)\) collapses isomorphism classes to single objects there is no obvious functorial morphism of algebras $\mathbb{K}\Reach_Q \ra \mathbb{K}\cR(Q)$. Hence, the filtration step
\begin{center}
\begin{tikzpicture}
\node at  (0,0) {\(Q\)};
\node at  (0,0.5) {\(\mathrel{\rotatebox[origin=c]{90}{$\hookrightarrow$}}\)};
\node  at  (0,1) {\(Q'\)};

\node at  (2,0) {\(\Reach(Q)\)};
\node at  (2,0.5) {\(\uparrow\)};
\node at  (2,1) {\(\Reach(Q')\)};

\node at  (4,0) {\(\cR(Q)\)};
\node at  (4,0.5) {\(\uparrow\)};
\node at  (4,1) {\(\cR(Q')\)};
	
\draw[shorten >= 29pt,shorten <= 8pt,->] (0,0) to (2,0);
\draw[shorten >= 29pt,shorten <= 8pt,->] (0,1) to (2,1);
\draw[shorten >= 14pt,shorten <= 28pt,->] (2,0) to (4,0);
\draw[shorten >= 15pt,shorten <= 29pt,->] (2,1) to (4,1);
\end{tikzpicture}
\end{center}
is not functorial at the level of the respective category algebras. Note that depending on the quiver morphism \(Q \hookrightarrow Q'\), even \(\mathbb{K}\cR(Q) \ra \mathbb{K}\cR(Q')\) might not be a homomorphism of algebras; an easy example is induced by the inclusion of an edge into a pair of reciprocal edges. This also shows that if we are to use condensation type operations, such as mapping into \(\cR(Q)\), then functorial persistence is not restored even by restricting to the category of quivers whose morphisms are injective maps on vertices. Nevertheless, as \(\Reach_Q = \cR(Q)\) for acyclic quivers, we still get a fully functorial persistence pipeline by restricting reachability to the category of acyclic quivers.

Despite the discussed shortcoming on the functoriality of a Hochschild homology persistent pipeline for general quivers, we wish to point out here that, in applications to topological data analysis, the above Morita equivalence proves actually to be effective. In fact, by Morita invariance, we have the equality 
$$\beta^\mathrm{HH}_*(\mathbb{K}\Reach_Q) = \beta^\mathrm{HH}_*(\mathbb{K}\cR(Q))$$ of Hochschild Betti numbers, and, from a computational and practical point of view, it is the evolution of the Betti numbers that is of interest in persistent homology. Despite a non-functorial pipeline of Hochschild homology groups of algebras for $\Quiver$, we still obtain a Betti curve, i.e.~the Betti numbers as a function of the filtration parameter, as the following composition:
\begin{equation*}\label{eq:pers_betti_HH}
  (\mathbb{R}, \leq) \ra \Quiver \xrightarrow{\Reach} \Preord\xrightarrow{L} \Poset \xrightarrow{\mathbb{K}\text{-}}\mathbb{K}\text{-}\mathbf{Alg}\xrightarrow{\beta^\mathrm{HH}} \mathbb{N} \ .
\end{equation*}
Note that, applying the same methods as of~\cite{vertechi}, the composition is stable when restricted to acyclic quivers. We summarise the discussion in the following definition: 

\begin{definition}
Let $\mathcal{F}\colon (\mathbb{R},\leq ) \to \Quiver_0$ be a filtration of finite acyclic quivers. Then, its persistent Hochschild homology groups are given by the composition
\begin{equation*}
  (\mathbb{R}, \leq) \ra \Quiver \xrightarrow{\Reach} \Preord\xrightarrow{L} \Poset \xrightarrow{\mathbb{K}\text{-}}\mathbb{K}\text{-}\mathbf{Alg} \ .
\end{equation*} 
If $\mathcal{F}\colon (\mathbb{R},\leq ) \to \Quiver$ is a filtration of  quivers, we define the persistent $\mathrm{HH}$-curves as the Betti curves of the persistent Hochschild homology groups.
\end{definition}

The classical result by Gerstenhaber and Schack \cite{GERSTENHABER1983143} gives a topological interpretation to persistent Hochschild homology. The categorical equivalence \(\Reach_Q \simeq \cR(Q)\) entails the homotopy equivalence \(|\Nerve(\Reach_Q)| \simeq |\Nerve(\cR(Q))|\) of the respective nerves,  hence isomorphism in homology. Then, in view of \cite{GERSTENHABER1983143}, the homology of the nerve \(\Nerve(\cR(Q))\) (when working over a field~\(\mathbb{K}\)) is isomorphic to \(\mathrm{HH}_*(\mathbb{K}\cR(Q))\). Therefore persistent Hochschild homology and persistent homology of \(\Reach_Q\) yield the same Betti curves:

\begin{proposition}
Let $\mathcal{F}\colon (\mathbb{R},\leq ) \to \Quiver$ be a filtration of  quivers. Then, the persistent \(\mathrm{HH}\)-curves agree with the simplicial Betti curves of the nerves of the reachability categories.
\end{proposition}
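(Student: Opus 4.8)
The plan is to trace through the chain of identifications already assembled in the paper, observing that each link preserves the homology groups we care about. First, I would recall the definition: the persistent $\mathrm{HH}$-curves of the filtration $\mathcal{F}\colon(\mathbb{R},\leq)\to\Quiver$ are the Betti curves of the persistent Hochschild homology groups, which by definition are obtained by postcomposing $\mathcal{F}$ with $\Reach$, the posetal reflection $L$, and the category-algebra functor $\mathbb{K}\text{-}$; so at each filtration value $r$ the relevant object is $\mathrm{HH}_*(\mathbb{K}\cR(\mathcal{F}(r)))$, and its ranks are the persistent $\mathrm{HH}$-Betti numbers. On the other side, the simplicial Betti curve of the nerve of the reachability category at value $r$ is $\dim_{\mathbb{K}} H_*(\Nerve(\Reach_{\mathcal{F}(r)});\mathbb{K})$.

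Next I would assemble the two identifications for a fixed quiver $Q=\mathcal{F}(r)$. The categorical equivalence $\Reach_Q\simeq\cR(Q)$ (from Section~4, since $\cR(Q)$ is the posetal reflection and $\Reach_Q$ is thin) induces a homotopy equivalence $|\Nerve(\Reach_Q)|\simeq|\Nerve(\cR(Q))|$, because the nerve sends equivalences of categories to homotopy equivalences; hence $H_*(\Nerve(\Reach_Q);\mathbb{K})\cong H_*(\Nerve(\cR(Q));\mathbb{K})$. Then, since $\cR(Q)$ is a finite poset, the Gerstenhaber--Schack theorem~\cite{GERSTENHABER1983143} gives an isomorphism $\mathrm{HH}_*(\mathbb{K}\cR(Q))\cong H_*(\Nerve(\cR(Q));\mathbb{K})$ when the coefficients form a field. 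Chaining these, $\mathrm{HH}_*(\mathbb{K}\cR(Q))\cong H_*(\Nerve(\Reach_Q);\mathbb{K})$, so in particular the ranks agree: $\beta^{\mathrm{HH}}_*(\mathbb{K}\cR(Q))=\dim_{\mathbb{K}}H_*(\Nerve(\Reach_Q);\mathbb{K})$. (One could equally route through $\mathrm{HH}_*(\mathbb{K}\Reach_Q)\cong\mathrm{HH}_*(\mathbb{K}\cR(Q))$ via the Morita equivalence and cite~\cite[Theorem~1.2.7]{loday}, but the nerve-level comparison is cleaner here.)

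Finally I would note that this equality holds at every filtration value $r$ and is compatible with the internal maps: for $r\leq r'$ the inclusion $\mathcal{F}(r)\hookrightarrow\mathcal{F}(r')$ induces compatible maps on nerves and (where defined) on the homology/Hochschild side, so the two curves $r\mapsto\beta^{\mathrm{HH}}_*$ and $r\mapsto\dim_{\mathbb{K}}H_*(\Nerve(\Reach_{\mathcal{F}(r)}))$ coincide as functions of the parameter. Since the statement only asserts agreement of the \emph{curves} (the Betti numbers as functions of the filtration parameter), and not a functorial isomorphism of persistence modules — which, as discussed just before the statement, would fail because $\mathbb{K}\Reach_Q\to\mathbb{K}\cR(Q)$ need not even be an algebra map — the pointwise argument suffices.

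The main obstacle, such as it is, is conceptual rather than technical: making sure the comparison is invoked at the level of Betti \emph{numbers} as functions of $r$, and not claiming a stronger functorial statement that the surrounding discussion has explicitly warned against. The only genuine input is the Gerstenhaber--Schack identification of simplicial homology of the order complex of a poset with the Hochschild homology of its incidence algebra over a field; everything else is bookkeeping with equivalences of categories and the definitions already set up in the excerpt.
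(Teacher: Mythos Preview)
Your proposal is correct and follows essentially the same route as the paper: the equivalence $\Reach_Q\simeq\cR(Q)$ gives a homotopy equivalence of nerves, and then Gerstenhaber--Schack identifies $H_*(\Nerve(\cR(Q));\mathbb{K})$ with $\mathrm{HH}_*(\mathbb{K}\cR(Q))$, yielding the pointwise equality of Betti numbers. Your explicit emphasis that the argument is pointwise and only establishes agreement of curves (not a functorial isomorphism of persistence modules) is a welcome clarification, but the underlying argument is the same as the paper's.
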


We hope that this variation of persistent homology of graphs, using reachability categories instead of the more classical Vietoris-Rips complexes, might find concrete applications in topological data analysis. Note that, as mentioned in Example \ref{ex:face_poset_and_Reach}, Hochschild homology of reachability categories can be of arbitrarily high degree, yielding non-trivial information also in degrees $>1$ (cf.~Theorem~\ref{thmhapp}). 

In this last section, we have discussed the use of reachability categories for constructing persistent Hochschild homology pipelines in topological data analysis; this was one of the the main motivations to develop the theory of reachability categories. We have observed that taking the filtration given by the reachability categories, and then applying the standard persistent methods, yields a well-behaved persistent homology theory which we might call \emph{reachability persistent homology}. We conclude with a further perspective on it.

\begin{remark}
    Taking the nerve of categories is a functorial operation, hence the classical persistent homology of \(\Reach_Q\)  yields a functorial persistent homology theory of quivers. Due to a lack of functoriality in the more general \(\mathrm{HH}\)-pipeline, having both a functorial and an \emph{efficiently computable} algebraic persistence via Hochschild homology seems to be unattainable with the tools at hand. Homology of \(\Reach_Q\) is then among the closest functorial ``approximations'' of a functorial persistent Hochschild homology of quivers. The non-functoriality discussed in this section also prohibits the pipeline from having a consistent basis over the filtration to produce a persistence diagram, beyond the Betti curve. The main obstacle, as we see it, is the lack of tools to efficiently compute \(\mathrm{HH}\) of some category or algebra induced from a real world quiver data. Therefore, it remains open what a suitable persistent Hochschild homology theory of general quivers should look like.
\end{remark}

\bibliographystyle{alpha}
\bibliography{references}

\thanks{\(\dagger\) Department of Mathematics, University of Bologna, Bologna, Italy -- luigi.caputi@unibo.it} 

\thanks{\(^\ast\) Nordita, Stockholm University, Stockholm, Sweden -- henri.riihimaki@su.se}
\end{document}